\newcommand{\RR}{{\mathbb R}}
\newcommand{\re}{\mathbb{R}}
\newcommand{\cpx}{\mathbb{C}}
\newcommand{\N}{\mathbb{N}}
\newcommand{\diag}{\mbox{diag}}
\newcommand{\lmd}{\lambda}
\newcommand{\eps}{\epsilon}
\def\af{\alpha}
\newcommand{\reff}[1]{(\ref{#1})}
\newcommand{\prm}{\prime}
\newcommand{\mc}[1]{\mathcal{#1}}
\newcommand{\st}{\mathit{s.t.}}
\newcommand{\bdes}{\begin{description}}
	\newcommand{\edes}{\end{description}}
\newcommand{\bal}{\begin{align}}
\newcommand{\eal}{\end{align}}
\newcommand{\bnum}{\begin{enumerate}}
	\newcommand{\enum}{\end{enumerate}}
\newcommand{\bit}{\begin{itemize}}
	\newcommand{\eit}{\end{itemize}}
\newcommand{\bea}{\begin{eqnarray}}
\newcommand{\eea}{\end{eqnarray}}
\newcommand{\be}{\begin{equation}}
\newcommand{\ee}{\end{equation}}
\newcommand{\baray}{\begin{array}}
	\newcommand{\earay}{\end{array}}
\newcommand{\bsry}{\begin{subarray}}
	\newcommand{\esry}{\end{subarray}}
\newcommand{\bca}{\begin{cases}}
	\newcommand{\eca}{\end{cases}}
\newcommand{\bcen}{\begin{center}}
	\newcommand{\ecen}{\end{center}}
\newcommand{\bbm}{\begin{bmatrix}}
	\newcommand{\ebm}{\end{bmatrix}}
\newcommand{\bmx}{\begin{matrix}}
	\newcommand{\emx}{\end{matrix}}
\newcommand{\bpm}{\begin{pmatrix}}
	\newcommand{\epm}{\end{pmatrix}}
\newcommand{\btab}{\begin{tabular}}
	\newcommand{\etab}{\end{tabular}}
\newtheorem{theorem}{Theorem}[section]
\newtheorem{prop}[theorem]{Proposition}
\theoremstyle{definition}
\newtheorem{example}[theorem]{Example}
\newtheorem{exm}[theorem]{Example}
\newtheorem{alg}[theorem]{Algorithm}
\newtheorem{assumption}[theorem]{Assumption}
\newtheorem{remark}[theorem]{Remark}
\numberwithin{equation}{section}
\begin{document}

\title[Bilevel Polynomial Optimization]
{A Lagrange Multiplier Expression Method for Bilevel Polynomial Optimization}

\author{Jiawang Nie}
\address{Department of Mathematics,
University of California, 9500 Gilman Drive, La Jolla, CA, USA, 92093.}
\email{njw@math.ucsd.edu}

\author{Li Wang}
\address{Department of Mathematics, University of Texas at Arlington,
The University of Texas at Arlington
701 S. Nedderman Drive
Arlington, TX, USA, 76019.
}
\email{li.wang@uta.edu}

\author{Jane J. Ye}
\address{Department of Mathematics and Statistics, University of Victoria, Victoria, B.C., Canada, V8W 2Y2.}
\email{janeye@uvic.ca}

\author{Suhan Zhong}
	\address{Department of Mathematics,
		University of California, 9500 Gilman Drive, La Jolla, CA, USA, 92093.}
\email{suzhong@ucsd.edu}

\subjclass[2010]{65K05, 90C22, 90C26, 90C34}
	
\keywords{bilevel optimization, polynomial,
Lagrange multiplier, Moment-SOS relaxation, semidefinite program}

\begin{abstract}
This paper studies bilevel polynomial optimization.
We propose a method to solve it globally by using
polynomial optimization relaxations.
Each relaxation is obtained from the Karush-Kuhn-Tucker (KKT) conditions
for the lower level optimization and the exchange technique
for semi-infinite programming.
For KKT conditions, Lagrange multipliers are represented
as polynomial or rational functions.
The Moment-SOS relaxations are used to solve the polynomial optimization relaxations.
Under some general assumptions, we prove the convergence of the algorithm
for solving bilevel polynomial optimization problems.
Numerical experiments are presented to show the efficiency of the method.
\end{abstract}

\maketitle

\section{Introduction}

This paper considers the bilevel optimization problem in the form
\be \label{bilevel:pp}
 \left\{ \baray{rl}
F^* := \min\limits_{x\in \re^n,y\in \re^p} & F(x,y) \\
\st  &  h_i(x,y) = 0 \, (i \in \mc{E}_1), \\
     &  h_j(x,y)\geq 0 \, (j \in \mc{I}_1), \\
		&  y\in S(x),
\earay \right.
\ee
where  $S(x)$ is the set of optimizer(s) of the  lower level problem
\be  \nonumber
(P_x) \qquad ~~~~~~~~~~~~~
\left\{ \baray{rl}
\min\limits_{ z\in \re^p}  & f(x,z)  \\
\st &  g_i(x,z) = 0 \, (i \in \mc{E}_2 ), \\
    &  g_j(x,z) \geq 0\, (j\in \mc{I}_2).
\earay\right. \qquad \qquad \qquad \qquad
\ee
In the above, $F(x,y)$ is the upper level objective function and
$h_i(x,y), h_j(x,y)$ are the upper level constraints,
while $f(x,z)$ is the lower level objective function and
$g_i(x,z), g_j(x,z)$ are the lower level constraints.
Here $\mc{E}_1, \mc{I}_1, \mc{E}_2, \mc{I}_2$ are finite index sets
(some or all of them are possibly empty).
For convenience, we denote the feasible set of the lower level problem by
\be \label{def:Z}
Z(x):= \big\{z\in \re^p \mid  g_i(x,z) =0\  (i \in \mc{E}_2 ), \  g_j(x,z) \geq 0\, (j\in \mc{I}_2) \big \}.
\ee
We call (\ref{bilevel:pp}) a \textit{simple} bilevel optimization problem (SBOP)
if $Z(x)\equiv Z$ is independent of $x$, and call (\ref{bilevel:pp})
a \textit{general} bilevel optimization problem (GBOP) if $Z(x)$ depends on $x$.
When all defining functions are polynomials, we call  (\ref{bilevel:pp})
a bilevel polynomial optimization problem.
Throughout the paper, we assume that the solution set $S(x)$
of the lower level problem $(P_x)$ is nonempty for all feasible $x$.

Bilevel optimization has broad applications, e.g.,
the moral hazard model of the principal-agent problem in economics
\cite{mirrlees1999theory}, electricity markets and networks
\cite{bjorndal2005deregulated},
facility location and production problem
\cite{boglarka2016solving},
meta learning and hyper-parameter selection in
machine learning~\cite{Franceschi,K,Liu}.
More applications can be found in the monographs \cite{bard1998practical,dempe2018bilevel,dempe2015bilevel,Shimizu}
and the surveys on bilevel optimization \cite{ColsonMarcotteSavard,Dempebook}
and the references therein.

Bilevel optimization is challenging both theoretically and computationally,
because of the optimality constraint $y \in S(x)$.
The classical (or the first order) approach is to relax this constraint
by the first order optimality condition for the lower level problem.
But solving the resulting single-level problem may not even recover
a stationary point of the original bilevel optimization problem
if the lower level problem is nonconvex; see \cite{mirrlees1999theory}
{and Example \ref{ex:KKTtransfail} for  counterexamples.}
Moreover, even for the case that the lower level optimization is convex,
the resulting single-level problem may not be equivalent to
the original bilevel optimization problem if local optimality is considered and
the lower level multiplier set is not a singleton (see \cite{Dempe2012MP}).

For each $y\in Z(x)$, it is easy to see the following equivalence
(without any assumptions about the lower level optimization, e.g., convexity)
\begin{equation}\label{equivalence}
y \in S(x) \Longleftrightarrow f(x,y)-  v(x)\leq 0
 \Longleftrightarrow  f(x,z)-f(x,y) \geq 0 \ \  \forall z\in Z(x),
\end{equation} where $v(x):=\inf_{z\in Z(x)} f(x,z)$
is the so-called value function for the lower level problem.
We call any reformulation using the first equivalence in (\ref{equivalence})
the value function reformulation, while those using the second equivalence
in (\ref{equivalence}) the semi-infinite programming (SIP) reformulation.
Using  the value function reformulation results in
an intrinsically nonsmooth optimization problem
which never satisfies the usual constraint qualification (\cite{ye1995optimality}).
Despite these difficulties, recent progresses have been made on constraint qualifications and optimality conditions for bilevel optimization problems,
where the lower level optimization is not assumed to be convex; see the work \cite{KuangYe,Dempebook,XuYe2020,Yebook,ye2010new}
and the references therein.

Solving bilevel optimization problems numerically is extremely hard,
since even when all defining functions are linear,
the computational complexity is already NP-hard \cite{BenAyed}.
Most prior methods in the literature are for
mathematical programs with equilibrium constraints (MPECs) {\cite{LuoPangRalph,Outrata}}
and hence can be used only to solve the reformulation of bilevel optimization
by the first order approach.
Recently, some methods for solving bilevel programs
that are not formulated as MPECs are proposed in
\cite{LLNashBilevel,SBLYe2013,mitsos2008global,outrata1990numerical,
xu2013smoothing,xu2014smoothing,XuYeZhang}.

When all defining functions are polynomials, an optimization problem
can be solved globally by the Lasserre type Moment-SOS relaxations
\cite{lasserre2001global,lasserre2009moments,laurent2014optimization,nie2014optimality}.
This motivates the usage of polynomial optimization techniques
for solving bilevel optimization problems globally 
\cite{jeyakumar2016convergent,nie2017bilevel}.

\subsection*{Contributions}

Denote the set containing all upper and lower level constraints:
\be \label{entireset:U}
\mathcal{U}:=\left\{
(x,y) \in \re^n\times \re^p\\
\left|\baray{l}
h_i(x,y) = 0\, (i \in \mc{E}_1), \,
g_i(x,y) = 0\, (i \in \mc{E}_2), \\
h_j(x,y) \ge 0\, (j \in \mc{I}_1), \,
g_j(x,y) \ge 0 \, (j \in \mc{I}_2)
\earay \right.
\right\}.
\ee
Based on the second equivalence in (\ref{equivalence}),
the bilevel optimization (\ref{bilevel:pp}) is  equivalent to
the following single-level optimization problem:
\be \nonumber
(P) \qquad \qquad ~~~~~~~~~~~~~\left\{ \baray{rl}
\min\limits_{x,y}   & F(x,y)  \\
\st &  (x,y) \in \mathcal{U}, \quad  f(x,z)-f(x,y) \geq 0
\quad  \forall \, z \in Z(x).
\earay \right.
\ee
Problem $(P)$ belongs to the class of
the so-called generalized semi-infinite programs
since the set $Z(x)$ is typically infinite and depends on $x$.

To solve $(P)$, one could construct a sequence of
polynomial optimization relaxations $(P_k)$
which have the same objective function as $(P)$ and have the feasible set
$\mathcal{U}_k$ satisfying the nesting containment:
\begin{equation*}
\mathcal{F} \subsetneqq \cdots  \subsetneqq \mathcal{U}_k \subsetneqq  \cdots
\subsetneqq \mathcal{U}_1  \subsetneqq  \mathcal{U}_0 \subsetneqq  \mathcal{U} ,
\end{equation*}
where $\mathcal{F}$ is the feasible set of $(P)$.
Let $(x^{(k)},y^{(k)})$ be a global minimizer of $(P_k)$.
If $y^{(k)}\in S(x^{(k)})$, then $(x^{(k)},y^{(k)})$ is also a global minimizer of $(P)$.
Otherwise, we can add  new constraints to get a tighter relaxation $(P_{k+1})$.
For the sequence $\{(x^{(k)},y^{(k)})\}_{k=1}^\infty$ produced this way,
we expect that its limit or accumulation point is a global minimizer of $(P)$.
This is a kind of exchange technique from SIP \cite{Hettich1993}.
In each relaxation, if we only relax the infinitely many constraints
\[ f(x,z)-f(x,y) \geq 0\quad  \forall z\in Z(x)\]
by finitely many ones of them,
then the convergence would be extremely slow.
{This is because the set $\mathcal{U}$
typically has a much higher dimension than the feasible set $\mathcal{F}$
of $(P)$, since each $(x,y)\in \mathcal{F}$
satisfies the optimality condition $y\in S(x)$ additionally.
For instance, when the lower level optimization $(P_x)$
is unconstrained, every point in $\mathcal{F}$ 
satisfies the first order optimality condition which is a system of $p$ equations,
and hence the set $\mathcal{U}$ is generally
$p$-dimensional higher than $\mathcal{F}$.}
To fasten the convergence significantly,
it was proposed in \cite{nie2017bilevel} to add the Jacobian representation
for the Fritz John conditions of the lower level problem into each relaxation.
However, the usage of Jacobian representation is typically inconvenient,
because it requires to compute minors of Jacobian matrices.
Moreover, the convergence of the method in \cite{nie2017bilevel} 
is only guaranteed for SBOPs. 
In this paper, we address these difficulties and
give an efficient method for solving GBOPs.

The major motivation for our new method is as follows.
For each $y\in S(x)$, we assume the Karush-Kuhn-Tucker (KKT) conditions hold
\begin{equation*}
\left\{\baray{r}
\nabla_z f(x,y)  -\sum_{j \in \mc{E}_2 \cup \mc{I}_2 }
           \lambda_j\nabla_z g_j(x,y) = 0,\\
 \lambda_j \geq 0,\, \lambda_jg_j(x,y)=0 \, (j \in \mc{I}_2 ),
\earay \right.
\end{equation*}
where the $\lmd_j$'s are Lagrange multipliers.
This can be guaranteed if $f$ and all $g_j$ are linear, or by imposing
the LICQ/MFCQ (see Section \ref{Section2.2}).
In the initial relaxation $(P_0)$,
we relax the constraint $y\in S(x)$ to its KKT conditions.
However, if we add the KKT conditions to $ \mathcal{U}$ and minimize
$F(x,y)$ over the original variables $(x,y)$ as well as $\lambda_j$'s,
the number of variables is significantly increased.
This is not practical if there are a large number of constraints.
By using the technique called  {\it Lagrange multiplier expression}
introduced in \cite{njw17a}, we express $\lmd_j$
as a polynomial (or rational) function, say, $\lambda_j(x,y)$.
Then, we choose the initial polynomial optimization relaxation to be
\be  \nonumber
(P_0)~~~~~~~\left\{
\baray{rl}
\min  & F(x,y) \\
\st & h_i(x,y) = 0\, (i \in \mc{E}_1), \,  h_j(x,y) \ge 0 \, (j \in \mc{I}_1), \\
    & g_i(x,y) = 0\, (i \in \mc{E}_2), \,  g_j(x,y) \ge 0 \, (j \in \mc{I}_2), \\
    & \nabla_z f(x,y)  -\sum_{j \in \mc{E}_2 \cup \mc{I}_2 }
           \lambda_j(x,y) \nabla_z g_j(x,y) = 0,\\
    & \lambda_j(x,y) \geq 0,\, \lambda_j(x,y) g_j(x,y) = 0 \, (j \in \mc{I}_2 ) .
\earay
\right.
\ee
Suppose $(x^{(k)},y^{(k)})$ is a global minimizer of $(P_k)$. If $y^{(k)}\in S(x^{(k)})$,
then $(x^{(k)},y^{(k)})$ must be a global minimizer for $(P)$. 
Otherwise, we can find a point $z^{(k)}\in  Z(x^{(k)})$ such that 
\[
f(x^{(k)},z^{(k)})-f(x^{(k)},y^{(k)})<0.
\]
Can we add the following constraint
\be \label{newcut:lower}
f(x,z^{(k)}) - f(x,y) \, \ge \, 0
\ee
to $\mathcal{U}_k$ to get a new relaxation $(P_{k+1})$ whose feasible set is
\be \label{nextiterationfeasibleset}
 \widetilde{\mathcal{U}}_{k+1} \, = \,
 \{(x,y)\in \mathcal{U}_k : f(x,z^{(k)}) - f(x,y) \, \ge \, 0\} \, ?
\ee
Since the point $(x^{(k)},y^{(k)})\not \in \widetilde{\mathcal{U}}_{k+1}$,
the new relaxation $(P_{k+1})$ given as above would be tighter.
For $(P_{k+1})$ to qualify for a relaxation of $(P)$, the feasible set
$\widetilde{\mathcal{U}}_{k+1}$ 
must contain the feasible region $\mathcal{F}$ of $(P)$. For SBOPs,  
i.e., $Z(x)\equiv Z$ is independent of $x$, the inequality \reff{newcut:lower}
holds for all $(x,y)$ satisfying $y\in S(x)$ and hence
$\mathcal{F} \subseteq  \widetilde{\mathcal{U}}_{k+1}$.
However, for GBOPs,  
the condition  $y\in S(x)$ may not necessarily imply
$f(x,z^{(k)})-f(x,y)\geq 0 \ \forall (x,y)\in \mathcal{U}$ unless $z^{(k)}\in Z(x)$.
Hence, the above $\widetilde{\mathcal{U}}_{k+1}$
may not contain the feasible set $\mathcal{F}$.
To fix this issue, we propose to find a polynomial extension of the vector $z^{(k)}$, 
which is a polynomial function $q^{(k)}(x,y)$ 
satisfying $q^{(k)}(x^{(k)},y^{(k)})=z^{(k)}$ and
$q^{(k)}(x,y) \in Z(x)$ for all $(x,y) \in \mc{U}$.
Such a polynomial extension $q^{(k)}(x,y)$ satisfies
\begin{equation}
\label{polyextend}
y\in S(x) \Longrightarrow  f(x,q^{(k)}(x,y)) - f(x,y) \, \ge \, 0.
\end{equation}
Therefore, we replace the feasible set in (\ref{nextiterationfeasibleset}) by
\begin{equation*}
\mathcal{U}_{k+1} :=  \{(x,y)\in \mathcal{U}_k| 
 f(x,q^{(k)}(x,y)) - f(x,y) \, \ge \, 0\}
\end{equation*}
and the next polynomial optimization relaxation is
\be \nonumber
(P_{k+1})~~~~~~~~~~~~~\left\{ \baray{rl}
\min\limits    & F(x,y)  \\
\st &  (x,y) \in \mathcal{U}_k ,  f(x,q^{(k)}(x,y))-f(x,y) \geq 0.
\earay\right.
\ee
Continuing in this way, we either get an optimal solution of $(P)$
for some $k$, or obtain an infinite sequence
$\{(x^{(k)},y^{(k)})\}_{k=1}^\infty$ such that each accumulation point
is a global minimizer of $(P)$.

The paper is organized as follows.
In Section~\ref{sc:prlm}, we review some basic facts
in polynomial optimization as well as constraint qualifications for nonlinear optimization.
In Section~\ref{sec:GBPP}, we propose
a general approach for solving bilevel polynomial optimization.
In Section~\ref{sec:discussion}, we discuss how to get
Lagrange multiplier expressions and the polynomial function $q^{(k)}(x,y)$.
The numerical experiments are reported in Section~\ref{sc:num}.
{Some conclusions and discussions are
given in Section \ref{sec:conclusion}.
}

\section{Preliminaries}\label{sc:prlm}

\subsection*{Notation}
The symbol $\mathbb{N}$  (resp., $\mathbb{R}, \mathbb{C}$)
denotes the set of nonnegative integers (resp., real numbers, complex numbers).
The $\re_+^n$ denotes the nonnegative orthant of $\re^n$.
For a set $S$ and a positive integer $n$, $S^n$ denotes the $n$ Cartesian products of $S$. 
For an integer $n>0$, $[n]:=\{1,\cdots,n\}$. Let $f(x,z)$ 
denote a continuously differentiable function. We use $\nabla f$ 
to denote its whole gradient and $\nabla_z f$ 
to denote its partial gradient with respect to $z$.
For a vector $v:=(v_1,\ldots,v_n)$ in $\mathbb{R}^n$, $\| v \|$
denotes the standard Euclidean norm and  $diag[v]$ denotes an $n$-by-$n$ 
diagonal matrix with the $i$th diagonal entry $v_i$ for all $i\in[n]$.
For $x := (x_1, \ldots, x_n) \in \mathbb{R}^n$ and 
$\af := (\af_1, \ldots, \af_n)\in\mathbb{N}^n$,
denote the monomial
\[
x^\af \, := \, x_1^{\af_1}\cdots x_n^{\af_n}.
\]
For a positive integer $k$, $[x]_k$ 
denotes the vector of all monomials of the highest degree $k$ 
ordered in the graded lexicographic ordering, i.e.,
\[
[x]_k:=(1, x_1, \cdots, x_n, x_1^2, x_1x_2, \cdots, x_n^k).
\]
The symbol $\mathbb{R}[x] := \mathbb{R}[x_1,\cdots,x_n]$
denotes the ring of polynomials in $\mathbb{R}[x]$ with real coefficients.
For a polynomial $p\in \RR[x]$, we use $\deg(p)$ to denote its degree,
while for a tuple of polynomial $p=(p_1,\ldots,p_r)$, $p_i\in\mathbb{R}[x],\ i\in[r]$, we use $\deg(p)$ to denote the highest degree of $p_i$, i.e., $\deg(p) = \max\{\deg(p_1),\ldots,\deg(p_r)\}$. For $k\in \mathbb{N}$,  $\mathbb{R}[x]_k$ denotes the collection of all real polynomials in $x$ with degree at most $k$.
For a given $p\in \RR[x]$,  define the set product
$p\cdot \mathbb{R}[x]:=\{pq|q\in \mathbb{R}[x]\}$.
The symbol $\mathbf{1}_n$ is used to denote an all-one vector in $\mathbb{R}^n$ and $\mathbf{1}$ denotes an all-one vector with the dimension  omitted. Denote by
$I_n$  the $n$-by-$n$ identity matrix.
For an optimization problem,
{\tt argmin} denotes the set of its minimizers.

\subsection{Moment-SOS relaxations}
\label{polynomial optimization}

For a tuple $p=(p_1,\ldots,p_r)$ in $\re[x]$,
$\mbox{Ideal}(p)$ denotes the smallest ideal containing all $p_i$, i.e.,
$
\mbox{Ideal}(p) = p_1 \cdot \re[x] + \cdots + p_r \cdot \re[x].
$
The $k$th {\it truncation} of the ideal $\mbox{Ideal}(p)$,
denoted as $\mbox{Ideal}(p)_k$, is the set
\[
p_1 \cdot \re[x]_{k-\deg(p_1)} + \cdots + {p_r}  \cdot \re[x]_{k-\deg(p_r)}.
\]
The real zero set of $p$ is denoted as
$\mc{V}(p)  := \{x \in \re^n | \,  p(x) = 0 \}.$

A polynomial $\sigma\in\mathbb{R}[x]$ is said to be a sum of squares (SOS) polynomial if
$\sigma = \sigma_1^2+\cdots+\sigma_k^2$ for some $\sigma_1,\ldots,\sigma_k\in\mathbb{R}[x]$.
We use the symbol $\Sigma[x]$ to denote the collection of all SOS polynomials in $x$.
Its $m$th truncation is given by
$\Sigma[x]_m:=\Sigma[x]\cap \mathbb{R}[x]_m$.
We define the \textit{quadratic module} with respect to $q=(q_1,\ldots,q_t)\in (\re[x])^t$ by
\[
\mbox{Qmod}(q) \,:= \, \Sigma[x]+q_1\cdot \Sigma[x]+\cdots+q_t\cdot\Sigma[x].
\]
For $k\in \mathbb{N}$ and $2k\geq deg(q)$, the $k$th truncation of $\mbox{Qmod}(q)$ is
\[
\mbox{Qmod}(q)_{2k}  \,:= \,
\Sigma[x]_{2k}+q_1\cdot\Sigma[x]_{2k-deg(q_1)}+\cdots+q_t\cdot \Sigma[x]_{2k-deg(q_t)}.
\]
For a tuple of polynomials {$q = (q_1,\ldots, q_t)$ in $\mathbb{R}[x]$},
 denote the   basic semi-algebraic set
$\mc{W} (q) := \{x\in\mathbb{R}^n|\ q(x)\geq 0\}.$

Given polynomial tuples $p$ and $q$,
if $f \in \mbox{Ideal}(p)+ \mbox{Qmod}{(q)}$, then it is easy to see that  $f(x)  \geq 0$ for all
$x\in \mc{V}(p) \cap \mc{W}(q)$. 
To ensure $f \in \mbox{Ideal}(p)+ \mbox{Qmod}{(q)}$, we typically need more than $f(x)  \geq 0$ for all
$x\in \mc{V}(p) \cap \mc{W}(q)$. 
The sum $\mbox{Ideal}(p)+ \mbox{Qmod}{(q)}$ is said to be {\it archimedean}
if there exists $b \in \mbox{Ideal}(p)+ \mbox{Qmod}{(q)}$ such that
$\mc{W}(b) = \{ x \in \re^n: b(x) \geq 0\}$ is {a} compact set. It is shown that $f\in \mbox{Ideal}(p)+\mbox{Qmod}(q)$ if $f>0$ on $\mc{V}(p) \cap \mc{W}(q)$ and
$\mbox{Ideal}(p)+\mbox{Qmod}(q)$ is archimedean \cite{putinar1993positive}.
This conclusion is often referenced as \textit{Putinar's Positivstellensatz}. 
When $f$ is only nonnegative (but not strictly positive) on $\mc{V}(p) \cap \mc{W}(q)$, 
we still have $f \in \mbox{Ideal}(p)+\mbox{Qmod}(q)$ 
under some generic conditions (cf. \cite{nie2014optimality}).

We consider the polynomial optimization problem
\begin{equation}\label{ppp}
{f_{min}:}=\min_{x\in\mathbb{R}^n}\ f(x)
\quad \st \quad p(x)=0,\, q(x)\geq 0,
\end{equation}
where $f\in \mathbb{R}[x]$ and $p,q$ are tuples of polynomials.
The feasible set of problem (\ref{ppp}) is $\mc{V}(p) \cap \mc{W}(q)$.
It is obvious that a scalar $\gamma \le f_{min}$ if and only if
$f- \gamma \geq 0$ on $\mc{V}(p) \cap \mc{W}(q)$,
which can be ensured by the membership
$f- \gamma \in \mbox{Ideal}(p)+\mbox{Qmod}(q)$.
The Moment-SOS  hierarchy of semidefinite relaxations for solving problem (\ref{ppp})
is to solve the relaxations
\begin{equation}
\label{dpr}
{f_{k}:}=\max\ \gamma \quad \st \quad
f-\gamma\in \mbox{Ideal}(p)_{2k}+\mbox{Qmod}(q)_{2k},
\end{equation}
for $k=1,2,\ldots$.
The asymptotic convergence $f_k\rightarrow f_{min}$ as $k\rightarrow\infty$
was shown in \cite{lasserre2001global}.
Under the archimedeanness and some classical optimality conditions,
(i.e., linear independence constraint qualification, strict complementarity
and second order sufficiency conditions),
it holds that  $f_k = f_{min}$ for all $k$ big enough,
as shown in \cite{nie2014optimality}.
The optimization problem (\ref{dpr}) can be solved as a semidefinite program
and hence can  be solved by  software packages such as
\texttt{SeDuMi} \cite{sturm1999using}
and {\tt GloptiPoly~3} \cite{henrion2009gloptipoly}.
Moreover, after obtaining solutions for problem (\ref{dpr}),
we can extract an optimizer for (\ref{ppp}) by using
the so-called \textit{flat truncation} condition \cite{nie2013certifying}.

\subsection{Constraint qualifications}\label{Section2.2}
\label{CQKKT}

Consider the  optimization problem
\begin{equation} \label{pro:op}
\left\{ \baray{rl}
\min& b(x) \\
\st & c_i(x) = 0 \, (i\in \mc{E}), \\
& c_j(x) \ge 0 \, (j \in \mc{I}), \\
\earay \right.
\end{equation}
where $b,c_i, c_j: \re^n \rightarrow \mathbb{R}$
are continuously differentiable.
Let $\mc{I}(\bar x):=\{ j\in \mc{I}| c_j(\bar x)=0\}$
be the active index set of inequalities at a feasible point $\bar x$.
The KKT condition is said to hold at $\bar x$
if there exist Lagrange multipliers $\lambda_j$ such that
\[
{\sum}_{j\in \mc{E}\cup \mc{I}}
\lambda_j \nabla c_j(\bar x) = \nabla b(\bar x), \quad
\, \lambda_j \geq 0,\,\, \lambda_j c_j(\bar x)=0 \, (j\in \mc{I}(\bar{x}) ).
\]
A feasible point $\bar x$ is called a KKT point if it satisfies the KKT condition.
A local minimizer must be a KKT point if all functions are linear.
For nonlinear optimization, certain constraint qualifications are required for KKT points.
The linearly independent constraint qualification (LICQ)
is said to hold at $\bar x$ if the gradient set
$\{\nabla c_j(\bar x)\}_{j\in \mc{E}\cup \mc{I}(\bar x)}$
is linearly independent.
The Mangasarian-Fromovitz constraint qualification (MFCQ)
is said to hold at $\bar x$ if the gradients
$\nabla c_j(\bar x) \,( j\in \mc{E} )$
are linearly independent and there exists a vector $d\in \re^n$ satisfying
\[
\nabla c_i(\bar x)^Td =0 \, ( i\in \mc{E} ), \quad
\nabla c_i(\bar x)^T d >0 \, (i\in \mc{I}(\bar x)).
\]
The MFCQ is equivalent to the following statement 
\[
{\sum}_{j\in \mc{E}\cup \mc{I}(\bar x)} \lambda_j \nabla c_j(\bar x) = 0,
\,\, \lambda_j \ge 0 \, (j\in \mc{I}(\bar x) )
\quad \Longrightarrow \quad  \lambda=0.
\]
When the functions $c_i(x) (i\in \mc{E})$ are linear and
$c_j(x) (j \in \mc{I}(\bar x))$ are concave, the Slater's condition
is said to hold if there exists $x_0$ such that
$c_i(x_0)=0 (i\in \mc{E}), c_i(x_0)>0 (i\in \mc{I})$.
The Slater's condition is equivalent to the MFCQ under the convexity assumption.
If the MFCQ holds at a local minimizer $\bar{x}$,
then $\bar{x}$ is a KKT point
and the set of Lagrange multipliers is compact.
If LICQ holds at $\bar{x}$, then the set of Lagrange multipliers is a singleton.
We refer to \cite{Brks} for constraint qualifications
in nonlinear programming.
For SBOPs, it is a generic assumption that each minimizer of
the lower level optimization $(P_x)$ is a KKT point (see \cite{nie2014optimality}).
When $x$ is one-dimensional,
this assumption is also generic (see \cite{JJT86}).
When the dimension of $x$ is bigger than one,
we do not know whether or not this assumption is generic.
However, in our computational experience,
this assumption is often satisfied.

\subsection{Lagrange multiplier representations}
\label{ssc:LagExpr}

Consider the optimization problem \reff{pro:op}
where $b,c_i, c_j$ are real polynomials in $x \in \re^n$.
For convenience, write that
\[
\mc{E} \cup \mc{I} \, = \, \{ 1, \ldots, m\}, \quad
c = (c_1, \ldots, c_m).
\]
The KKT condition for \reff{pro:op} implies that
\be \label{kkt:C(X)lmd=b(x)}
\underbrace{\bbm
\nabla  c_1(x) & \nabla c_2(x) &  \cdots &  \nabla c_m(x) \\
c_1(x) & 0  & \cdots & 0 \\
0  & c_2(x)  & \cdots & 0 \\
\vdots & \vdots & \ddots & \vdots \\
0  &  0  & \cdots & c_m(x)
\ebm}_{C(x) }
\underbrace{\bbm  \lmd_1 \\ \lmd_2 \\ \vdots \\ \lmd_m \ebm}_{\lmd}
=
\underbrace{\bbm  \nabla b(x) \\ 0 \\ \vdots \\ 0 \ebm}_{ \hat{b}(x)} .
\ee
If there exists a polynomial matrix $L(x)$ such that $L(x)C(x) = I_{m}$, then
\[
\lambda(x) \, =  \, L(x) \hat{b}(x).
\]
This gives an explicit expression for Lagrange multipliers as a function of $x$.
When does such a polynomial matrix $L(x)$ exist?
As showed in \cite{njw17a}, it exists if and only if the constraining tuple
$c$ is nonsingular (i.e.,  the matrix $C(x)$ has full column rank
for all complex vectors $x$). The nonsingularity is a generic condition
in the Zariski topology \cite[Prop.~5.7]{njw17a}.

\section{General Bilevel Polynomial Optimization}
\label{sec:GBPP}

In this section, we propose a framework for solving
the bilevel polynomial optimization \reff{bilevel:pp}.
It is based on solving a sequence of polynomial optimization relaxations,
with the usage of KKT conditions and Lagrange multiplier representations.

\subsection{Lagrange multiplier expressions and polynomial extensions}
\label{ssc:twoass}

For convenience, assume the constraining polynomial tuple
in the lower level optimization $(P_x)$ is
$
g \, := \, (g_1(x,z), \ldots, g_{m_2}(x,z)),
$ with  $[m_2]:=\mc{E}_2\cup \mc{I}_2$.
Then the KKT condition for $(P_x)$ implies that
\be \label{lowerkkt:G(x)lmd=f}
\underbrace{\bbm
\nabla_{z}  g_1(x,y) & \nabla_{z} g_2(x,y) &  \cdots &  \nabla_z g_{m_2}(x,y) \\
g_1(x,y) & 0  & \cdots & 0 \\
0  & g_2(x,y)  & \cdots & 0 \\
\vdots & \vdots & \ddots & \vdots \\
0  &  0  & \cdots & g_{m_2}(x,y)
\ebm}_{G(x,y) }
\underbrace{\bbm  \lmd_1 \\ \lmd_2 \\ \vdots \\ \lmd_{m_2} \ebm}_{\lmd}
=
\underbrace{\bbm  \nabla_z f(x,y) \\ 0 \\ \vdots \\ 0 \ebm}_{ \hat{f}(x,y)} ,
\ee
with $\lambda_j \geq 0, j\in \mc{I}_2$.
Because of the dependence on $x$, the above matrix $G(x,y)$
is typically not full column rank for all
complex pairs $(x,y)$. Hence, there may not exist
$L(x,y)$ such that $L(x,y) G(x,y) = I_{m_2}$.
However, rational polynomial expressions
always exist for Lagrange multipliers.
Therefore, we make the following assumption.

\begin{assumption}
\label{asmp:Lag}
Suppose the KKT condition (\ref{lowerkkt:G(x)lmd=f}) holds
for every minimizer of \reff{bilevel:pp}, there exist polynomials
$d_1(x,y),\ldots, d_{m_2}(x,y) {\ge 0}$ on $\mc{U}$
and there are non-identically zero polynomials
$\phi_1(x,y), \ldots, \phi_{m_2}(x,y)$ such that
\be \label{lagexpr:lmdj=pj:d} 
\lmd_j d_j(x,y) \, =\, \phi_j(x,y),
\quad j=1,\ldots, m_2
\ee
for all KKT points $(x,y)$ as in \reff{lowerkkt:G(x)lmd=f}.
\end{assumption}

Suppose there is a polynomial matrix $W(x,y)$ such that
\[
W(x,y) G(x,y) \, = \, \diag[d(x,y)] ,\quad
d(x,y) \,:=\, (d_1(x,y),\ldots, d_{m_2}(x,y)).
\]
Then we can get Lagrange multiplier expressions as in
(\ref{lagexpr:lmdj=pj:d}), since
\begin{equation}
\label{d(xy)lmd=W(xy)hatf}
\diag[d(x,y)]  \lmd = W(x,y) G(x,y) \lmd = W(x,y) \hat{f}(x,y),
\end{equation}
which is the same as
\be \label{formula:d(x,y):phi(x,y)}
d_j(x,y) \lmd_j \, = \, \big( W(x,y) \hat{f}(x,y) \big)_j .
\ee
(The subscript $j$ denotes the $j$th entry.)
The polynomial $\phi_j(x,y)$ in \reff{lagexpr:lmdj=pj:d}
is then $\big( W(x,y) \hat{f}(x,y) \big)_j$.
Let $D(x,y)$ be the least common multiple of $d_1(x,y),\ldots, d_{m_2}(x,y)$
and $D_j(x,y)$ be the quotient polynomial $D(x,y)/d_j(x,y)$.
Under Assumption~\ref{asmp:Lag}, the set of KKT points in
\reff{lowerkkt:G(x)lmd=f} is contained in
\begin{equation}
\label{def:kktset:K}
\mc{K} := \left\{ (x,y)
\left| \begin{array}{c}
D(x,y) \nabla_z f(x,y)-\sum_{j=1}^{m_2}
D_j(x,y) \phi_j(x,y)  \nabla_z g_j(x,y)=0,\\
\phi_j(x,y) \geq 0,\ \phi_j(x,y) g_j(x,y)=0\, (j\in \mc{I}_2)
\end{array}  \right.
\right \}.
\end{equation}
Indeed, the equivalence holds when $d(x,y)$ is positive on $\mathcal{U}$.
If $d_j(\hat{x}, \hat{y})=0$ for some $j$ and $(\hat{x}, \hat{y}) \in \mc{U}$
then $D(\hat{x}, \hat{y})=0$ and hence the equations
in \reff{def:kktset:K} are automatically satisfied.

\begin{assumption}
\label{ass:q(x,y)}
For every pair $(\hat{x}, \hat{y}) \in \mc{U} \cap \mc{K}$
and for every $\hat{z}\in S(\hat{x})$,
there exists a polynomial tuple
$q(x,y) := (q_1(x,y), \ldots, q_p(x,y))$ such that
\be \label{def:q}
q(\hat{x},\hat{y})=\hat{z},\quad
q(x,y)\in Z(x) \quad \forall \, (x, y)\in\mathcal{U}.
\ee
\end{assumption}
We call the function $q(x,y)$ in the above a
{\it polynomial extension} of the point $\hat{z}$ at $(\hat{x}, \hat{y})$.
More details about Lagrange multiplier expressions and polynomial extensions,
required in Assumptions~\ref{asmp:Lag} and \ref{ass:q(x,y)},
will be given in Section~\ref{sec:discussion}.

\subsection{An algorithm for bilevel polynomial optimization}
\label{ssc:alg}

Under Assumptions~\ref{asmp:Lag} and \ref{ass:q(x,y)},
we propose the following algorithm to
solve the bilevel polynomial optimization \reff{bilevel:pp}.
Recall that $Z(x)$ and $\mc{U}$ are the sets as in
\reff{def:Z} and \reff{entireset:U}, respectively.
We refer to Section~\ref{polynomial optimization}
for the Moment-SOS hierarchy.

\begin{alg}
\label{alg:GBPP}
For the given polynomials $F(x,y), h_i(x,y), f(x,z), g_j(x,z)$
in \reff{bilevel:pp}, do the following:

\begin{itemize}

\item [Step~0]
Find rational expressions for Lagrange multipliers
as in \reff{lagexpr:lmdj=pj:d}, for Assumption~\ref{asmp:Lag}.
Let
$
\mc{U}_0  :=  \mc{U} \cap \mc{K},
$
where $\mc{K}$ is the set in \reff{def:kktset:K}. Let $k:=0$.

\item [Step~1] Apply the Moment-SOS hierarchy to
solve the polynomial optimization
\begin{equation}
\label{alg:updated P}
(P_k)\quad\left\{
\baray{rl}
F_k^* := \,\min\limits_{x\in\mathbb{R}^n,y\in\mathbb{R}^p} &  F(x,y) \\
\st \quad &  (x,y) \in \mc{U}_k.
\earay
\right.
\end{equation}
If $(P_k)$ is infeasible, then either \reff{bilevel:pp}
has no optimizers, or none of its optimizers
satisfy the KKT condition \reff{lowerkkt:G(x)lmd=f}
for the lower level optimization.
If it is feasible and has a minimizer,
solve it for a minimizer $(x^{(k)}, y^{(k)})$.

\item [Step~2] Apply the Moment-SOS hierarchy to solve the lower level optimization
\begin{equation}
  \label{alg:Q}
(Q_k)\quad\left\{
\baray{rl}
\upsilon_k^*:=\min\limits_{z \in \re^p}  &
              f(x^{(k)},z) - f(x^{(k)}, y^{(k)}) \\
\st  &  z \in Z(x^{(k)}),\ (x^{(k)},z)\in \mathcal{K}.
\earay
\right.
\end{equation}
for an optimizer $z^{(k)}$.
If the optimal value $\upsilon_k^* = 0$,
then $(x^{(k)}, y^{(k)})$ is an optimizer for \reff{bilevel:pp} and stop.
Otherwise, go to the next step.

\item [Step~3]
Construct $q^{(k)}(x,y)$, a polynomial extension of the vector $z^{(k)}$,
such that
\[
q^{(k)}(x^{(k)},y^{(k)})=z^{(k)}, \quad  q^{(k)}(x,y) \in Z(x)
\quad \forall (x,y)\in \mc{U}.
\]
Update the set $\mc{U}_{k+1}$ as
\begin{equation*}
\mc{U}_{k+1} \, :=  \{(x,y)\in \mc{U}_k | f(x,q^{(k)}(x,y))-f(x,y) \geq 0 \}.
\end{equation*}
Let $k:= k+1$ and go to Step~1.

\end{itemize}

\end{alg}

In Algorithm~\ref{alg:GBPP}, the polynomial optimization problems
$(P_k), (Q_k)$ need to be solved correctly. This can be done by using
the Lasserre type Moment-SOS hierarchy of semidefinite relaxations.
We refer to Section~\ref{polynomial optimization} for the details.
To solve $(P_k)$, the Moment-SOS hierarchy produces
a sequence of convergent lower bounds for
$F_k^*$, say, $\{\rho_l\}_{l=1}^{\infty}$, such that
\[
\rho_1\leq \cdots\leq \rho_l\leq \cdots\leq F_k^*,\quad 
\lim_{l\rightarrow\infty}\rho_l=F_k^*,
\]
where the subscript $l$ is the relaxation order.
For generic polynomial optimization problems, it has finite convergence,
i.e., $\rho_l=F_k^*$ for some $l$. To check the convergence,
we need to extract a feasible point $(\hat{x},\hat{y})$
such that $\rho_l=F(\hat{x},\hat{y})=F_k^*$. It was shown in
\cite{nie2013certifying} that
the flat truncation condition is a sufficient (and almost necessary) criterion
for detecting the convergence. When the flat truncation condition is met,
the Moment-SOS relaxation is tight and one (or more) minimizer $(x^{(k)}, y^{(k)})$
can be extracted for $(P_k)$.
The lower level polynomial optimization $(Q_k)$
can be solved in the same way by the Moment-SOS hiearchy.

It was shown in \cite{nie2014optimality} that
the hierarchy of Moment-SOS relaxations has finite convergence,
under the archimedeanness and some classical optimality
conditions (i.e., the LICQ, strict complementarity and
second order sufficiency condition).
As a special case, this conclusion can also be applied to the
sub-optimization problem $(P_k)$ in Algorithm~\ref{alg:GBPP},
in particular when the lower level optimization has no inequality constraints
(i.e., $\mc{I}_2=\emptyset$), to ensure the finite convergence.
However, when $\mc{I}_2\not =\emptyset$, there is a complementarity constraint,  
so the problem $(P_k)$ is a mathematical program with complementarity constraints.
It is known that the usual constraint qualification such as MFCQ and LICQ
will never hold for such problems (see \cite[Proposition 1.1]{yezhuzhu}
and hence the current theory is not applicable to guarantee the finite convergence.
Therefore, we are not sure whether or not the
Moment-SOS hierarchy has finite convergence for solving $(P_k)$,
when \reff{bilevel:pp} is given by generic polynomials.
We remark that it is possible that the
Moment-SOS hierarchy fails to have finite convergence for
some special cases of $(P_k)$. For instance, this is the case
if $F$ is the Motzkin polynomial and \reff{bilevel:pp}
has a ball constraint and all $f,g_i, g_j$ are zero polynomials
(see \cite{nie2014optimality}).
However, in our computational experience,
the sub-optimization problem $(P_k)$
is almost always solved successfully by the Moment-SOS hierarchy.
In contrast, the sub-optimization problem $(Q_k)$
is easier to be solved by the Moment-SOS hierarchy.
This is because Lagrange multiplier expressions for $(P_x)$
are used to formulate $(Q_k)$.
The Moment-SOS hierarchy has finite convergence
for almost all cases. This is implied by results in \cite{njw17a}.

In addition to the Lasserre type Moment-SOS relaxations,
there exist other types of relaxations for solving polynomial optimization.
For instance, the SOCP relaxations based on SDSOS polynomials \cite{AhmadiSDSOS2019},
the bounded degree SOS relaxations \cite{LasserreBoundedSOS},
or a mixture of them \cite{ChuongSOCP}. In principle, these relaxation methods
can also be used in Algorithm~\ref{alg:GBPP}.
However, we would like to remark that the performance of these relaxations
is much worse than the classical Lasserre type Moment-SOS relaxations.
Such a comparison is done in Example \ref{outrata1994ex31}.
A major reason is that these other types of relaxations
cannot solve the sub-optimization problems $(P_k)$ or $(Q_k)$
accurately enough. Note that Algorithm~\ref{alg:GBPP} requires that
global optimizers of $(P_k)$ and $(Q_k)$ are computed successfully.

\subsection{Convergence analysis}
\label{ssc:convergence}

We study the convergence of Algorithm~\ref{alg:GBPP}.
First, we show that if  the problem $(P_x)$ is convex for each $x$,
then Algorithm~\ref{alg:GBPP} will find a global optimizer of
the bilevel optimization \reff{bilevel:pp} in the initial loop.

\begin{prop}
\label{prop:lower convex}
Suppose that Assumptions~\ref{asmp:Lag} and \ref{ass:q(x,y)}
hold and all $d_j(x,y)>0$ on $\mc{U}$.
For every given $x$, assume that $f(x,z)$ is convex with respect to $z$,
$g_i(x,z)$ is  linear in $z$ for $i \in \mc{E}_2$,
and $g_j(x,z)$ is concave in $z$ for $j \in \mc{I}_2$.
Assume that the Slater's condition holds for $Z(x)$ for all feasible $x$.
Then, the bilevel optimization (\ref{bilevel:pp}) is equivalent to
$(P_0)$ and Algorithm \ref{alg:GBPP}
terminates at the loop $k=0$.
\end{prop}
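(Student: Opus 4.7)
The plan is to show that, under the convexity hypotheses together with the Slater's condition, the set $\mathcal{U}_0 = \mathcal{U} \cap \mathcal{K}$ coincides exactly with the feasible set $\mathcal{F}$ of the bilevel problem (\ref{bilevel:pp}); from this the equivalence of $(P)$ and $(P_0)$ is immediate, and termination of Algorithm~\ref{alg:GBPP} at the initial loop follows by checking that $\upsilon_0^{*}=0$.

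First I would establish the two-sided implication characterizing $S(x)$ for fixed feasible $x$. Since $f(x,\cdot)$ is convex, $g_i(x,\cdot)$ is linear for $i\in\mc{E}_2$, and $g_j(x,\cdot)$ is concave for $j\in\mc{I}_2$, the problem $(P_x)$ is a convex program. Because Slater's condition is assumed for $Z(x)$, the MFCQ (equivalent to Slater's condition in the convex case, as recalled in Section~\ref{Section2.2}) holds at every minimizer of $(P_x)$, so the KKT condition (\ref{lowerkkt:G(x)lmd=f}) is necessary for optimality. Conversely, convexity makes the KKT condition sufficient. Hence for every feasible $x$, $y\in S(x)$ if and only if there exist multipliers $\lambda_j$ satisfying (\ref{lowerkkt:G(x)lmd=f}) with $\lambda_j\ge 0$ for $j\in\mc{I}_2$.

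Next I would translate this equivalence into membership in $\mathcal{K}$. By Assumption~\ref{asmp:Lag} and the formula (\ref{d(xy)lmd=W(xy)hatf}), at any such KKT point $(x,y)$ one has $d_j(x,y)\lambda_j=\phi_j(x,y)$. Because $d_j(x,y)>0$ on $\mathcal{U}$ by assumption, the multiplier is uniquely recovered as $\lambda_j=\phi_j(x,y)/d_j(x,y)$, and substituting into (\ref{lowerkkt:G(x)lmd=f}) and multiplying by $D(x,y)$ yields precisely the defining equations of $\mathcal{K}$ in (\ref{def:kktset:K}); the sign condition $\phi_j(x,y)\ge 0$ follows from $\lambda_j\ge 0$ and $d_j>0$, and the complementarity $\phi_j(x,y)g_j(x,y)=0$ follows from $\lambda_j g_j(x,y)=0$. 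Conversely, for $(x,y)\in\mathcal{U}\cap\mathcal{K}$, setting $\lambda_j := \phi_j(x,y)/d_j(x,y)$ produces multipliers verifying (\ref{lowerkkt:G(x)lmd=f}) with $\lambda_j\ge 0$, so by the convex KKT-sufficiency above we obtain $y\in S(x)$. Combined with the upper level constraints encoded in $\mathcal{U}$, this gives $\mathcal{U}_0=\mathcal{F}$, so $(P_0)$ is literally equivalent to $(P)$ and $F_0^{*}=F^{*}$.

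Finally, for termination at $k=0$, let $(x^{(0)},y^{(0)})$ be a global minimizer of $(P_0)$ extracted in Step~1. By the above, $y^{(0)}\in S(x^{(0)})$, so
\[
f(x^{(0)},y^{(0)}) \;=\; \min_{z\in Z(x^{(0)})} f(x^{(0)},z).
\]
In Step~2 the problem $(Q_0)$ minimizes $f(x^{(0)},z)-f(x^{(0)},y^{(0)})$ over $z\in Z(x^{(0)})$ with the additional constraint $(x^{(0)},z)\in\mathcal{K}$; but again by convexity and Slater, every element of $S(x^{(0)})$ satisfies this KKT constraint, so the feasible set of $(Q_0)$ contains $S(x^{(0)})$, and the infimum equals $0$. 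Hence $\upsilon_0^{*}=0$ and the algorithm terminates. The only delicate point in carrying this out is the careful bookkeeping in the second step, namely making sure that the strictly positive denominators $d_j$ on $\mathcal{U}$ really do prevent the spurious solutions of (\ref{def:kktset:K}) coming from $D(x,y)=0$, which is exactly why the hypothesis $d_j(x,y)>0$ on $\mathcal{U}$ is assumed.
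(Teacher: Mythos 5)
Your proposal is correct and follows essentially the same route as the paper's proof: convexity plus Slater's condition makes the KKT system necessary and sufficient for lower-level optimality, the positivity of the $d_j$ on $\mathcal{U}$ makes the KKT system equivalent to membership in $\mathcal{K}$, hence $\mathcal{F}=\mathcal{U}\cap\mathcal{K}=\mathcal{U}_0$ and the conclusion follows. Your write-up simply supplies details (the recovery $\lambda_j=\phi_j/d_j$, and the explicit verification that $\upsilon_0^*=0$ in Step~2) that the paper's three-sentence proof leaves implicit.
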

\begin{proof}
Under the given assumptions, $y \in S(x)$ if and only if
$y$ is a KKT point for problem $(P_x)$,
which is then equivalent to $(x,y) \in \mc{K}$,
since all $d_j(x,y) >0$ on $\mc{U}$.
Then, the feasible set of (\ref{bilevel:pp})
is equivalent to $\mc{U} \cap \mc{K}$.
This implies that (\ref{bilevel:pp}) is equivalent to
$(P_0)$ and Algorithm \ref{alg:GBPP}
terminates at the initial loop $k=0$.
\end{proof}

Second, if Algorithm~\ref{alg:GBPP} terminates at some loop $k$,
we can show that it produces a global optimizer
for the bilevel optimization (\ref{bilevel:pp}).

\begin{prop}
Suppose that Assumptions~\ref{asmp:Lag} and \ref{ass:q(x,y)} hold.
If Algorithm \ref{alg:GBPP} terminates at the loop $k$, then the point
$(x^{(k)}, y^{(k)})$ is a global optimizer of (\ref{bilevel:pp}).
\end{prop}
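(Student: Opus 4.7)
The plan is to verify two claims in turn: first, that $(x^{(k)}, y^{(k)})$ is feasible for \reff{bilevel:pp}; second, that $F(x^{(k)}, y^{(k)})$ attains the global optimal value $F^*$. Termination at loop $k$ must occur in Step~2 with $\upsilon_k^* = 0$, since termination in Step~1 (infeasibility of $(P_k)$) yields no candidate point at all.

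For feasibility, I would begin by noting $(x^{(k)}, y^{(k)}) \in \mc{U}_k \subseteq \mc{U}_0 = \mc{U} \cap \mc{K}$. In particular, $y^{(k)} \in Z(x^{(k)})$ and $(x^{(k)}, y^{(k)}) \in \mc{K}$, so $y^{(k)}$ itself is feasible for $(Q_k)$ and attains objective value $0$; this confirms $\upsilon_k^* \leq 0$. The hypothesis $\upsilon_k^* = 0$ then asserts that $y^{(k)}$ minimizes $f(x^{(k)},\cdot)$ over the set $Z(x^{(k)}) \cap \{z : (x^{(k)},z) \in \mc{K}\}$. To upgrade this to minimization over all of $Z(x^{(k)})$, I would invoke the standing premise underlying Assumption~\ref{asmp:Lag}, namely that at least one minimizer of the lower-level problem $(P_{x^{(k)}})$ satisfies the KKT condition \reff{lowerkkt:G(x)lmd=f}. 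Choosing such a $\tilde{z}\in S(x^{(k)})$ with $(x^{(k)},\tilde{z})\in\mc{K}$, we see that $\tilde{z}$ is feasible for $(Q_k)$, so $f(x^{(k)}, y^{(k)}) \leq f(x^{(k)}, \tilde{z}) = v(x^{(k)})$. Combining with the reverse inequality $v(x^{(k)}) \leq f(x^{(k)}, y^{(k)})$ (immediate from $y^{(k)} \in Z(x^{(k)})$) and the equivalence \reff{equivalence}, we obtain $y^{(k)} \in S(x^{(k)})$, so $(x^{(k)}, y^{(k)})$ lies in the feasible set $\mc{F}$ of \reff{bilevel:pp}.

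For optimality, I would show by induction on $j = 0, 1, \ldots, k$ that every global minimizer $(x^*, y^*)$ of \reff{bilevel:pp} belongs to $\mc{U}_j$. For $j = 0$, Assumption~\ref{asmp:Lag} forces $(x^*, y^*) \in \mc{K}$ through the rational Lagrange multiplier expression \reff{lagexpr:lmdj=pj:d}, while $(x^*, y^*) \in \mc{U}$ is immediate from upper and lower level feasibility. For the inductive step, the only new constraint in $\mc{U}_{j+1}$ relative to $\mc{U}_j$ is $f(x, q^{(j)}(x,y)) - f(x,y) \geq 0$; Assumption~\ref{ass:q(x,y)} guarantees $q^{(j)}(x^*, y^*) \in Z(x^*)$, and $y^* \in S(x^*)$ together with the second equivalence in \reff{equivalence} then forces $f(x^*, q^{(j)}(x^*, y^*)) \geq f(x^*, y^*)$. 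Hence $(x^*, y^*) \in \mc{U}_k$, giving $F_k^* \leq F(x^*, y^*) = F^*$. Combined with $F^* \leq F(x^{(k)}, y^{(k)}) = F_k^*$ (from the previously established feasibility), the two inequalities collapse to $F(x^{(k)}, y^{(k)}) = F^*$.

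The main obstacle I anticipate is the passage from ``$y^{(k)}$ minimizes among lower-level feasible points that also satisfy KKT'' to ``$y^{(k)} \in S(x^{(k)})$''. This step genuinely needs the auxiliary fact that at least one point of $S(x^{(k)})$ lies in $\mc{K}$; Assumption~\ref{asmp:Lag} as stated only constrains minimizers of the bilevel problem, not those of arbitrary lower-level problems $(P_x)$, so the write-up must either be read as implicitly assuming this for every $x$ arising in the algorithm (which is the generic situation under LICQ/MFCQ at lower-level minimizers, per the discussion in Section~\ref{Section2.2}) or state it explicitly. Aside from that, every other ingredient is a routine chaining of inclusions $\mc{U}_k \subseteq \mc{U}_0$, the definition of $\mc{K}$, and the polynomial-extension property from Assumption~\ref{ass:q(x,y)}.
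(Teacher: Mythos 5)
Your proof is correct and follows essentially the same route as the paper's: establish $\mathcal{F}\subseteq\mathcal{U}_k$ (hence $F_k^*\le F^*$) from Assumptions~\ref{asmp:Lag} and \ref{ass:q(x,y)}, and then show that termination forces $y^{(k)}\in S(x^{(k)})$, so that $(x^{(k)},y^{(k)})\in\mathcal{F}$ and $F_k^*\ge F^*$. The one place you go beyond the paper is the passage from $\upsilon_k^*=0$ to $y^{(k)}\in S(x^{(k)})$, which the paper asserts in a single line; your observation that this step needs $S(x^{(k)})\cap\{z:(x^{(k)},z)\in\mathcal{K}\}\ne\emptyset$ --- guaranteed by the standing premise that lower-level minimizers are KKT points, but not literally by Assumption~\ref{asmp:Lag} as written --- is accurate and supplies a detail the paper's own argument glosses over.
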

\begin{proof}
By Assumption \ref{asmp:Lag}, the KKT condition (\ref{lowerkkt:G(x)lmd=f})
holds at each $(x,y) \in \mathcal{U}\cap \{(x,y): y\in S(x)\}=\mathcal{F}$
and hence $\mathcal{F} \subseteq \mathcal{U}_0:=\mathcal{U}\cap \mathcal{K}$.
By the construction of $q^{(k)}(x,y)$ as required for Assumptions~\ref{ass:q(x,y)},
we have shown $\mathcal{F} \subseteq \mathcal{U}_{k}$ for each $k$,
by virtue of (\ref{polyextend}).
Hence we have $F_k^*\leq F^*$ for all $k$,
where $F^*$ denotes the optimal value of (\ref{bilevel:pp}).
 According to the stopping rule, if Algorithm \ref{alg:GBPP}
terminates at the $k$th loop, then $y^{(k)}\in S(x^{(k)})$. 
This means  $(x^{(k)}, y^{(k)})\in \mathcal{F}$. 
Consequently $F_k^*=F(x^{(k)},y^{(k)}) \geq F^*$.
Hence $(x^{(k)}, y^{(k)})$ is a global optimizer of (\ref{bilevel:pp}).
\end{proof}

Last, we study the asymptotic convergence of Algorithm \ref{alg:GBPP}.
To prove the convergence, we need to assume that the value function $v(x)$
is continuous at an accumulation point $x^*$. This is the case
under the so-called {\it restricted inf-compactness} (RIC) condition
(see e.g., \cite[Definition 3.13]{GuoLinYeZhang})
and either $Z(x)$ is independent of $x$ or the MFCQ holds at some
$\bar z\in Z(x^*)$; see \cite[Lemma 3.2]{GauvinDubeau}
for the upper semicontinuity and \cite[page 246]{Clarke} for the lower semicontinuity.
The RIC holds at $x^*$ for $v(x)$ if the value $v(x^*)$ is finite and
there exist a compact set $\Omega$ and a positive number $\epsilon_0$,
such that for all $\| x - x^* \| < \eps_0 $ with $v(x) <v(x^*)+\epsilon_0$,
there exists $z\in S(x) \cap \Omega$.
For instance, $v(x)$ satisfies the RIC at $x^*$
(see \cite[\S6.5.1]{Clarke}) under one of the following conditions.
\begin{itemize}
\item The set $Z(x)$ is uniformly compact around $x^*$ (i.e.,
there is a neighborhood
$N(x^*)$ of $x^*$ such that the closure of $\cup_{x\in N(x^*)} Z(x)$ is compact).

\item The lower level objective
$f(x,z)$ satisfies the growth condition, i.e., there exists a positive constant $\delta>0$  such that the set
\[
\left\{z \Bigg|
\baray{l}
g_i(x^*, z)=\alpha_i (i\in \mc{E}), \,
 g_j(x^*, z)=\alpha_j (j\in \mc{I}), \\
 f(x^*, z)\leq \vartheta, \,
\sum_{ i\in \mc{E} \cup \mc{I} } \alpha_i^2 \le \delta
\earay
\right \}
\]
is bounded for all real values $\vartheta$.

\item The objective $f(x,z)$
is weakly coercive in $z$  with respect to $Z(x)$
for all $x$ sufficiently close to $x^*$, i.e.,
there is a neighborhood $N(x^*)$ of $x^*$ such that
\[
\lim_{z\in Z(x), \|z\|\rightarrow \infty}
 f(x,z)=\infty \qquad {\forall  x\in N(x^*) } .
\] 
\end{itemize}

\noindent
The following is the asymptotic convergence result for Algorithm \ref{alg:GBPP}.

\begin{theorem}
\label{thm:alg:convergence}
For Algorithm~\ref{alg:GBPP}, we assume the following:
\bit

\item[(a)]
All optimization problems $(P_k)$ and $(Q_k)$
have global minimizers.

\item[(b)]
The Algorithm~\ref{alg:GBPP}
does not terminate at any loop,
so it produces the infinite sequence
$\{ (x^{(k)}, y^{(k)}, z^{(k)}) \}_{k=0}^\infty$.

\item[(c)] Suppose $(x^*,y^*,z^*)$ is an accumulation point of
$\{ (x^{(k)}, y^{(k)}, z^{(k)}) \}_{k=0}^\infty$ and
the value function $v(x)$ is continuous at $x^*$.

\item[(d)] {The polynomial functions $q^{(k)}(x,y)$
converge to $q^{(k)}(x^*,y^*)$ uniformly for $k \in \N$ as $(x,y) \to (x^*,y^*)$.}

\eit
{Then, $(x^*,y^*)$
is a global minimizer for the bilevel optimization (\ref{bilevel:pp}).}
\end{theorem}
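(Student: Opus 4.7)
The plan is to show, via an exchange/diagonal argument, that the accumulation point $(x^*, y^*)$ is both feasible for the bilevel problem and achieves the global optimum $F^*$. Throughout I work along a subsequence (still indexed by $k$) on which $(x^{(k)}, y^{(k)}, z^{(k)}) \to (x^*, y^*, z^*)$.

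First, I would record the easy bookkeeping. The nesting $\mathcal{F}\subseteq\mathcal{U}_{k+1}\subseteq\mathcal{U}_k$, established in the preceding proposition together with the Step~3 construction, yields $F_k^* \leq F_{k+1}^* \leq F^*$, and continuity of $F$ gives $F(x^*,y^*)=\lim F_k^* \leq F^*$. Since each $(x^{(k)},y^{(k)})$ lies in the closed basic semialgebraic set $\mathcal{U}$, so does $(x^*,y^*)$; in particular $y^*\in Z(x^*)$, so $f(x^*,y^*)\geq v(x^*)$. It now suffices to prove the reverse inequality $f(x^*,y^*)\leq v(x^*)$, as this forces $y^*\in S(x^*)$ and hence $(x^*,y^*)\in\mathcal{F}$, which combined with the bound on $F(x^*,y^*)$ finishes the proof.

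The key step is the reverse inequality. For any fixed $k$ and any later index $l$ (in the subsequence), $(x^{(l)},y^{(l)})\in \mathcal{U}_l\subseteq \mathcal{U}_{k+1}$, so the added cut gives
\[
f(x^{(l)}, q^{(k)}(x^{(l)},y^{(l)})) \,\geq\, f(x^{(l)},y^{(l)}).
\]
Letting $l\to\infty$ with $k$ fixed, using continuity of $f$ and of the polynomial $q^{(k)}$, I obtain $f(x^*, q^{(k)}(x^*,y^*))\geq f(x^*,y^*)$. To let $k\to\infty$ in this, I invoke hypothesis (d): the uniform convergence forces $q^{(k)}(x^{(k)},y^{(k)}) - q^{(k)}(x^*,y^*)\to 0$, and since $q^{(k)}(x^{(k)},y^{(k)})=z^{(k)}\to z^*$ by construction in Step~3, it follows that $q^{(k)}(x^*,y^*)\to z^*$. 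Passing to the limit yields $f(x^*,z^*)\geq f(x^*,y^*)$. For the matching reverse inequality, observe that $(x^{(k)},y^{(k)})\in\mathcal{U}_k\subseteq\mathcal{K}$ and $y^{(k)}\in Z(x^{(k)})$, so $z=y^{(k)}$ is feasible for $(Q_k)$, which forces $\upsilon_k^*\leq 0$; hence $f(x^{(k)},z^{(k)})\leq f(x^{(k)},y^{(k)})$, and in the limit $f(x^*,z^*)\leq f(x^*,y^*)$. These two inequalities give $f(x^*,z^*)=f(x^*,y^*)$. Finally, since $z^{(k)}$ globally minimizes $f(x^{(k)},\cdot)$ over $Z(x^{(k)})\cap\{z:(x^{(k)},z)\in\mathcal{K}\}$, the KKT-captures-minimizers conclusion behind Assumption~\ref{asmp:Lag} gives $f(x^{(k)},z^{(k)})=v(x^{(k)})$; continuity of $v$ at $x^*$ from hypothesis (c) then yields $f(x^*,z^*)=v(x^*)$, so $f(x^*,y^*)=v(x^*)$ as desired.

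The main obstacle is the double-limit coordination in the middle paragraph: I must swap $l\to\infty$ and $k\to\infty$ in a controlled manner, and this is precisely where the uniform convergence in (d) becomes indispensable. Without it, the polynomials $q^{(k)}$ could change with $k$ badly enough that, although $q^{(k)}(x^{(k)},y^{(k)})\to z^*$ by design, the off-diagonal values $q^{(k)}(x^*,y^*)$ could fail to approach $z^*$, breaking the chain of inequalities. Once this is handled, the argument is driven by closedness of $\mathcal{U}$, continuity of $v$ at $x^*$, and the monotone nesting of the $\mathcal{U}_k$'s.
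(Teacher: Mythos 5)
Your proof is correct and follows essentially the same route as the paper's: you use the accumulated cuts $f(x,q^{(k)}(x,y))\ge f(x,y)$ evaluated along the subsequence, hypothesis (d) to identify $\lim_k q^{(k)}(x^*,y^*)$ with $z^*$, the identity $f(x^{(k)},z^{(k)})=v(x^{(k)})$, and continuity of $v$ at $x^*$ — exactly the ingredients of the paper's inequality (3.12), merely reorganized (and with one redundant step, the bound $f(x^*,z^*)\le f(x^*,y^*)$ from $\upsilon_k^*\le 0$, which is not needed once $f(x^*,z^*)=v(x^*)$ is in hand). The feasibility and optimality bookkeeping at the start and end also matches the paper.
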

\begin{proof}
Since $(x^*, y^*)$ is an accumulation point of the sequence
$\{ (x^{(k)}, y^{(k)}) \}_{k=0}^\infty$,
there is a subsequence $\{ k_\ell \}$ such that $k_\ell \to \infty$ and
\[
(x^{k_\ell}, y^{k_\ell}, z^{k_\ell})  \, \to \, (x^*, y^*, z^*).
\]
Since each $z^{(k_\ell)} \in Z(x^{(k_\ell)})$, we can see that $z^* \in Z(x^*)$.
The feasible set of $(P_{k_{\ell}})$ contains that of \reff{bilevel:pp},  so
\[
F(x^*, y^*) \, = \, \lim_{\ell \to \infty} \,
 F(x^{(k_\ell)}, y^{(k_\ell)}) \, \leq \, F^* ,
\] where $F^*$ is the optimal value of the bilevel optimization (\ref{bilevel:pp}).
(The polynomial $F(x,y)$ is a continuous function.)
To prove $F(x^*,y^*) \ge F^*$, we show that $(x^*,y^*)$
is feasible for problem \reff{bilevel:pp}.
Define the functions
\be \label{def:phi}
H(x,y,z) \, := \, f(x,z) - f(x,y), \quad
\phi(x,y) \,:=\, \inf\limits_{z\in Z(x)} \, H(x,y,z).
\ee
Observe that $\phi(x,y)  = v(x) - f(x,y) \le 0$ for all $(x,y) \in \mc{U}$
and $\phi(x^*,y^*) =0$ if and only if $(x^*,y^*)$ is feasible for \reff{bilevel:pp}.
Since $v(x)$ is continuous at $x^*$, we have $\phi(x^*, y^*)\leq 0.$
Next, we show that $\phi(x^*, y^*) \geq 0$.
For an arbitrary $k^{\prm} \in \N$, and for all $k_{\ell} \geq k^{\prm}$,
the point $(x^{(k_\ell)},y^{(k_\ell)})$ is feasible for $(P_{k^\prm})$, so
\[
H(x^{(k_\ell)},y^{(k_\ell)},z)\geq 0\quad \forall
z\in  \mc{V}^{(k^\prm)}_{ k_\ell }
\]
where $\mc{V}^{(k^\prm)}_{ k_\ell }$ is the set defined as
\[
\mc{V}^{(k^\prm)}_{ k_\ell } :=  \left\{
q^{(0)}(x^{(k_\ell)},y^{(k_\ell)}), \, q^{(1)}(x^{(k_\ell)},y^{(k_\ell)}),
\ldots, \, q^{(k^\prm-1)}(x^{(k_\ell)},y^{(k_\ell)}) \right \}.
\]
As $\ell \to \infty$, we can get
\be \label{H(x*y*z)>=0}
H(x^*,y^*,z)\geq 0\quad \forall z\in \mc{V}^{(k^\prm)}_{ \ast },
\ee
where the set $\mc{V}^{(k^\prm)}_{ \ast }$ is
\[
\mc{V}^{(k^\prm)}_{ \ast } :=  \left\{ q^{(0)}(x^*,y^*), \, q^{(1)}(x^*,y^*),
\, \ldots, \, q^{(k^\prm-1)}(x^*,y^*) \right \}.
\]
The inequality \reff{H(x*y*z)>=0} holds for all $k^\prm$, so
\be \label{H(x*y*z)>=0:closure}
H(x^*,y^*,z) \geq 0 \quad \forall z \in
T:= \{ q^{(k)}(x^*,y^*) \}_{k \in \N }.
\ee
It follows that
\[
H(x^*,y^*,q^{(k_\ell)} (x^*,y^*))\geq 0.
\]
In Algorithm~\ref{alg:GBPP}, each point $z^{(k_\ell)} \in Z(x^{(k_\ell)})$ satisfies
\[
\phi(x^{(k_\ell)},y^{(k_\ell)}) \, = \, H(x^{(k_\ell)},y^{(k_\ell)},z^{(k_\ell)}).
\]
Therefore, we have
\begin{equation}\label{3.12}
\baray{rcr}
\phi(x^*, y^*) & = & \phi(x^{(k_\ell)},y^{(k_\ell)})+\phi(x^*,y^*)
-\phi(x^{(k_\ell)},y^{(k_\ell)})\\
 & \geq & \Big( H(x^{(k_\ell)},y^{(k_\ell)},z^{(k_\ell)})
 -H(x^*,y^*,q^{(k_\ell)} (x^*,y^*)) \Big) + \\
 &  & \Big( \phi(x^*, y^*)-\phi(x^{(k_\ell)},y^{(k_\ell)}) \Big).
\earay
\end{equation}
Since $z^{(k_\ell)} = q^{(k_\ell)}(x^{(k_\ell)}, y^{(k_\ell)})$,
by the condition (d), we know that
\[
\lim_{\ell \rightarrow \infty} z^{(k_\ell)} =
\lim_{\ell \rightarrow \infty} q^{(k_\ell)}(x^{(k_\ell)}, y^{(k_\ell)})
=\lim_{\ell \rightarrow \infty} q^{(k_\ell)} (x^*,y^*),
\]
\[
H(x^{(k_\ell)},y^{(k_\ell)},z^{(k_\ell)}) - H(x^*,y^*,q^{(k_\ell)} (x^*,y^*))
\rightarrow 0 \quad \mbox{as}\quad \ell\rightarrow \infty,
\]
by the continuity of the polynomial function $H(x,y,z)$  at $(x^*, y^*, z^*)$.
By the assumption, $v(x)$ is continuous at $x^*$, so
$\phi(x,y) = v(x) - f(x,y)$ is also continuous at $(x^*, y^*)$.
Letting $\ell \to \infty$ in (\ref{3.12}), we get $\phi(x^*, y^*)\geq 0$.
Thus, $(x^*, y^*)$ is feasible for \reff{bilevel:pp} and so $F(x^*,y^*) \geq F^*$.
In the earlier, we already proved $F(x^*,y^*) \leq F^*$,
so $(x^*, y^*)$ is a global optimizer of \reff{bilevel:pp},
i.e., $(x^*, y^*)$  is a global minimizer of
the bilevel optimization \reff{bilevel:pp}.
\end{proof}

\begin{remark}
To ensure that the sequence $\{(x^{(k)},\,y^{(k)},\,z^{(k)})\}$ 
has an accumulation point, one may assume it is bounded.  
A sufficient condition for this is that
the set $\mc{U}$ is bounded or the the upper level objective $F(x,y)$
satisfies the growth condition, i.e., the set
\[
\Big \{(x,y)\in \mathcal{U} \cap \mathcal{K} : \, F(x, y)\leq \vartheta \Big \}
\]
is bounded for all value $\vartheta$. The condition (d)
in Theorem~\ref{thm:alg:convergence} can be either checked directly
on $q^{(k)}(x,y)$ or implied by that the degrees and coefficients of polynomials
$q^{(k)}(x,y)$ are uniformly bounded.
{For instance, if the polynomial sequence of $q^{(k)}(x,y)$
has bounded degrees and bounded coefficients,
then $q^{(k)}(x,y)$ must converge to $q^{(k)}(x^*,y^*)$ uniformly
as $(x,y) \to (x^*,y^*)$. As shown in the subsection~\ref{q:discuss},
when the lower level optimization $(P_x)$ has
the box, simplex or annular type constraints,
the $q^{(k)}(x,y)$ can be constructed explicitly,
and the resulting polynomial sequence of $q^{(k)}(x,y)$
has bounded degrees and bounded coefficients.
Therefore, the convergence of Algorithm~\ref{alg:GBPP}
is guaranteed for these cases of $(P_x)$,
when the conditions (a), (b), (c) hold.}
\end{remark}

\section{Constructions of polynomials}
\label{sec:discussion}

In Algorithm~\ref{alg:GBPP}, we need Lagrange multiplier expressions
as in \reff{lagexpr:lmdj=pj:d}, required for Assumption~\ref{asmp:Lag},
and the polynomial function $q(x,y)$,
required in Assumption~\ref{ass:q(x,y)}.
This section discusses how they can be obtained.

\subsection{Lagrange multiplier expressions}
\label{ssc:Lagmulexpr}

Lagrange multiplier expressions (LME) are discussed in \cite{njw17a}.
For the classical single level polynomial optimization \reff{pro:op},
the existence of a polynomial matrix $L(x)$ satisfying
$L(x)C(x) = I_m$ is equivalent to that the constraining polynomial tuple
$c$ is nonsingular. If the feasible set $Z(x)$ of the lower level optimization
$(P_x)$ does not depend on $x$,
i.e., \reff{bilevel:pp} is a SBOP, 
the matrix $G(x,y)$ does not depend on $x$,
and then there exists a polynomial matrix $W(y)$
satisfying $W(y) G(y) = I_{m_2}$ for generic $g$ \cite{njw17a}.
If $Z(x)$ depends on $x$,
there typically does not exist $W(x,y)$ such that $W(x,y) G(x,y) = I_{m_2}$.
This is because the matrix $G(x,y)$ in \reff{lowerkkt:G(x)lmd=f}
is typically not full column rank for all complex $x \in \cpx^n$, $y \in \cpx^p$.
We generally do not expect polynomial expressions
for Lagrange multipliers of $(P_x)$ for GBOPs.

However, we can always find a matrix polynomial $W(x,y)$ such that
\be \label{WG(x,y)=d(x,y)}
W(x,y) G(x,y) \, = \, \diag[d(x,y)],
\ee
for a denominator polynomial vector
\[
d(x,y)  \,:=\, \big( d_1(x,y), \ldots, d_{m_2}(x,y) \big)
\]
which is nonnegative on $\mc{U}$.
This ensures the Assumption~\ref{asmp:Lag}.
The $W(x,y), d(x,y)$ satisfying \reff{WG(x,y)=d(x,y)} are not unique.
In computation, we prefer that $W(x,y), d(x,y)$ have low degrees
and $d(x,y) > 0$ on $\mc{U}$
(or $d(x,y)$ has as few as possible zeros on $\mc{U}$).
We would like to remark that there always exist such
$W(x,y), d(x,y)$ satisfying \reff{WG(x,y)=d(x,y)}.
Note that $H(x,y) := G(x,y)^T G(x,y)$ is a positive semidefinite matrix polynomial.
If the determinant $\det H(x,y)$ is not identically zero
(this is the general case),
then the adjoint matrix $\mbox{adj}\big( H(x,y) \big)$ satisfies
\[
\mbox{adj}\big( H(x,y) \big)  H(x,y) \, = \,
\det H(x,y)  I_{m_2}.
\]
Then the equation \reff{WG(x,y)=d(x,y)} is satisfied for
\[
W(x,y) := \mbox{adj}\big( H(x,y) \big)  G(x,y)^T,  \quad
d(x,y) = \det H(x,y)\mathbf{1}_{m_2}.
\]
The above choice for $W(x,y), d(x,y)$ may not be very practical
in computation, because they typically have high degrees.
In applications, there often exist more suitable choices for
$W(x,y), d(x,y)$ with much lower degrees.

\begin{example}
\label{multi_rep}
Consider the lower level optimization problem
\[
\left\{
\begin{array}{rl}
\min\limits_{y\in\mathbb{R}^2}& x_1y_1+x_2y_2\\
\st& (2y_1-y_2,x_1-y_1,y_2,x_2-y_2)\geq 0.
\end{array}
\right.
\]
The matrix $G(x,y)$ and $\hat{f}(x,y)$ in (\ref{lowerkkt:G(x)lmd=f}) are:
\[
G(x,y) = \begin{pmatrix}
2 & -1 & 0 & 0\\
-1 & 0 & 1 & -1\\
2y_1-y_2 & 0 & 0 & 0\\
0 & x_1-y_1 & 0 & 0\\
0 & 0 & y_2 & 0\\
0 & 0 & 0 & x_2-y_2
\end{pmatrix},\quad
\hat{f}(x,y) = \begin{pmatrix}
x_1\\x_2\\0\\0\\0\\0
\end{pmatrix}.
\]
The equation \reff{WG(x,y)=d(x,y)} holds for the denominator vector
\[
d(x,y) = \big(2x_1-y_2, \, 2x_1-y_2,\, x_2(2x_1-y_2),\, x_2(2x_1-y_2) \big)
\]
and the matrix $W(x,y)$ which is given as follows
\[
\left(\begin{array}{*{6}c}
x_1-y_1 & 0 & 1 & 1 & 0 & 0\\
y_2-2y_1 & 0 & 2 & 2 & 0 & 0\\
(x_2-y_2)(x_1-y_1) & (x_2-y_2)(2x_1-y_2) & x_2-y_2 & x_2-y_2 & 2x_1-y_2 & 2x_1-y_2\\
y_2(y_1-x_1) & y_2(y_2-2x_1) & -y_2 & -y_2 & 2x_1-y_2 & 2x_1-y_2
\end{array}\right).
\]
Note that $d(x,y) \ge 0$ for all feasible $(x,y)$.
\end{example}

In numerical computation, we often choose
$W(x,y), d(x,y)$ in \reff{WG(x,y)=d(x,y)}
to have low degrees and $d(x,y) > 0$ on $\mc{U}$
(or $d(x,y)$ has as few as possible zeros on $\mc{U}$).
Although we prefer explicit expressions for $W(x,y)$ and $d(x,y)$,
it may be too complicated to do that for some problems.
In the following, we give a numerical method for finding $W(x,y)$ and $d(x,y)$.
Select a point $(\hat{x}, \hat{y}) \in \mc{U}$.
For a priori low degree $\ell$, we consider the
following convex optimization in $W(x,y)$, $d(x,y)$:
\be
\label{Lagrep:SOSprog}
\left\{ \baray{rl}
\max &  \gamma_1 + \cdots + \gamma_{m_2} \\
\st  & W(x,y)G(x,y) = \diag[d(x,y)] ,\\
  & d(\hat{x},\hat{y}) = \mathbf{1}_{m_2}, \,
   \gamma_1 \ge 0, \ldots,  \gamma_{m_2} \ge 0, \\
 & W(x,y) \in \Big( \re[x,y]_{2\ell-\deg(G)} \Big)^{m_2 \times (p+m_2)}, \\
 & d_j(x,y) -\gamma_j \in \mbox{Ideal}(\Phi)_{2\ell}  + \mbox{Qmod}(\Psi)_{2\ell}
 \,\, (j \in [m_2]) .  \\
\earay
\right.
\ee
In the above, the polynomial tuples $\Phi,\Psi$ are
\be \label{PhiPsi}
\Phi := \{h_i\}_{i \in \mc{E}_1 } \cup \{g_i\}_{ i \in \mc{E}_2}, \quad
\Psi := \{h_j\}_{j \in \mc{I}_1 } \cup \{g_j\}_{ j \in \mc{I}_2}.
\ee
The first equality constraint in \reff{Lagrep:SOSprog} is (\ref{WG(x,y)=d(x,y)}),
which gives a set of linear constraints about coefficients of $W(x,y), d(x,y)$.
The last constraint implies that each
$d_j(x,y) \ge \gamma_j \ge 0, \forall (x,y)\in \mc{U}$.
The equality $d(\hat{x},\hat{y}) = \mathbf{1}_{m_2}$
ensures that $d(x,y)$ is not identically zero.
As commented in the earlier of this subsection,
we have shown that \reff{Lagrep:SOSprog}
must have a solution if the degree $\ell$ is large enough,
when $G(x,y)^TG(x,y)$ is not identically singular.
In practice, we always start with a low degree $\ell$.
If \reff{Lagrep:SOSprog} is infeasible,
we then increase the value of $\ell$, until it becomes feasible.

\subsection{The construction of polynomial extensions}
\label{q:discuss}

We can construct a polynomial extension, required in Assumption \ref{ass:q(x,y)},
for many bilevel optimization problems.
If $(P_x)$ has linear equality constraints,
we can get rid of them by eliminating variables.
If $(P_x)$ has nonlinear equality constraints,
generally there is  no polynomial $q(x,y)$ satisfying Assumption~\ref{ass:q(x,y)},
unless the corresponding algebraic set is rational.
So, we consider cases that $(P_x)$
has no equality constraints, i.e., the label set $\mc{E}_2 = \emptyset$.
Moreover, we assume the polynomials $g_j(x,z)$
are linear in $z$, for each $j \in \mc{E}_2$.
Recall the polynomial tuples $\Phi,\Psi$ given as in \reff{PhiPsi}.
For a priori degree $\ell$ and for given $\hat{x}, \hat{y}, \hat{z}$,
we consider the following polynomial system about $q$:
\be \label{q:exist}
\left\{ \baray{l} \\
 q(\hat{x},\hat{y})=\hat{z}, \\
  g_j(x, q) \in {\mbox{Ideal}(\Phi)_{2\ell}}+\mbox{Qmod}(\Psi)_{2\ell} \, (j \in \mc{I}_2) ,  \\
q = (q_1, \ldots, q_p) \in \big( \re[x,y] \big)^{p}.
\earay
\right.
\ee
The second constraint in (\ref{q:exist}) implies that 
$g_j(x, q(x,y))\geq 0, \forall (x,y) \in \mc{U},  j\in  \mc{I}_2$.
Hence $q$ obtained  as above must satisfy Assumption \ref{ass:q(x,y)}.
The above program can be solved by the software
\texttt{Yalmip} \cite{lofberg2004yalmip}.

\begin{exm}
\label{ex:simpleQ}
Consider Example \ref{multi_rep} with
\begin{align*}
 &\hat{x} =(1,0),\ \hat{y}=(1,0),\ \hat{z}=(0,0),\\
 &h(x,y) = (3x_1-x_2,x_2,x_2-x_1+1),\\
 &g(x,y) = (2y_1-y_2,x_1-y_1,y_2,x_2-y_2).
\end{align*}
For $\ell=2$, a satisfactory $q:=(q_1,q_2)$ for \reff{q:exist} is
\[
q_1(x,y) =  x_2/3,\quad  q_2(x,y) = 2x_2/3,
\]
because $g(x,q)=\frac{1}{3}(0,h_1(x,y),2h_2(x,y),h_2(x,y))$ and
\[
h_1(x,y), \, h_2(x,y)  \, \in \,
\mbox{Ideal}(\Phi)_{2\ell}+\mbox{Qmod}(\Psi)_{2\ell}.
\]
\end{exm}

For computational convenience, we prefer
explicit expressions for $q(x,y)$.
In the following, we give explicit expressions
for various cases of bilevel optimization problems.

\subsubsection{Simple bilevel optimization}
\label{ssc:sbop}

If the feasible set $Z(x)$ of the lower level optimization $(P_x)$
is independent of $x$, i.e., $Z(x) \equiv Z$,
then we can just simply choose
\[
{q(x,y) \, := \, z}
\]
in Assumption~\ref{ass:q(x,y)}, for all $z \in Z$ and all $(x,y)\in \mc{U}$.
It is a constant polynomial function.
Therefore, Assumption~\ref{ass:q(x,y)} is always satisfied for all
simple bilevel optimization.

\subsubsection{Box constraints}
\label{ssc:box}

A typical case is that the lower level problem $(P_x)$
has box constraints.
Suppose the feasible set $Z(x)$ of $(P_x)$ is given as
\[
l(x) \,\leq \, z \, \leq \, u(x),
\]
where $l(x) := \big(l_1(x), \ldots, l_p(x) \big)$,
$u(x) := \big(u_1(x), \ldots, u_p(x) \big)$.
For every $(\hat{x},\hat{y})\in \mc{U} \cap \mathcal{K}$ and $\hat{z}\in S(\hat{x})$,
we can choose $q:= (q_1,\ldots,q_p)$ as
\[
q_j(x,y) \,:= \, \mu_j l_j(x)+(1-\mu_j)u_j(x),\quad j=1,\ldots, p,
\]
where each scalar
$\mu_j:=(u_j(\hat{x})-\hat{z}_j)/(u_j(\hat{x})-l_j(\hat{x})) \in [0,1]$.
(For the special case that $u_j(\hat{x})=l_j(\hat{x})$,
we have $\hat{z}_j=u_j(\hat{x})=l_j(\hat{x})$
and simply choose $\mu_j = 0$.) Then, for each $j$,
\[
q_j(\hat{x},\hat{y}) =
\mu_j l_j(\hat{x}) + (1-\mu_j) u_j(\hat{x}) = \hat{z}_j.
\]
Clearly, $q(x,y) \in Z(x)$ for all $(x,y)\in \mc{U}$.
The following is a more general case.

\begin{exm}
\label{prop:row4rank}
Suppose the feasible set $Z(x)$ of $(P_x)$ is given as
\[
 l(x) \,\le \, Az \,\leq \, u(x) ,
\]
where $A:=[a_1,\ldots,a_{m_2}]^T\in\mathbb{R}^{m_2\times p}$
is a full row rank matrix and $m_2 \leq p$.
Let $a_{m_2+1},\ldots, a_p$ be vectors such that the matrix
\[
B \, :=\, [a_1,\ldots,a_{m_2}, a_{m_2+1},\ldots, a_p]^T \in \re^{p \times p}
\]
is invertible. Then the linear coordinate transformation $z = B^{-1}w$
makes the constraints become the box constraints
$l_j(x)\le w_j \leq \, u(x)_j,\, j\in[m_2]$.
Hence we can choose $q=B^{-1}q'$, where $q' := (q_1',\ldots,q_p')$ as
\[
q_j'(x,y) :=\left\{
\begin{array}{ll}
\mu_jl_j(x)+(1-\mu_j)u_j(x),& j = 1, \ldots, m_2,\\
(By)_j, & j = m_2+1, \ldots, p.
\end{array}
\right.
\]
where each scalar
\[
\mu_j:=(u_j(\hat{x})- (B\hat{z})_j)/(u_j(\hat{x})-l_j(\hat{x})) \in [0,1].
\]
For the special case that $u_j(\hat{x})-l_j(\hat{x})=0$,
we just set $\mu_j=0$.
One can similarly verify that $q(x,y) \in Z(x)$ for all $(x,y)\in \mc{U}$.
\end{exm}

\subsubsection{Simplex constraints}
\label{ssc:simplex}

We consider the case that the lower level optimization $(P_x)$
has the simplex type constraints
\[
l(x) \le z, \quad \mathbf{1}^Tz \leq u(x) ,
\]
where $l(x)$ is a $p$-dimensional polynomial function, $\mathbf{1}$
is the vector of all ones, and $u(x)$ is a scalar polynomial function in $x$.
For every $(\hat{x},\hat{y})\in\mc{U}$ and $\hat{z}\in S(\hat{x})$,
we can choose $q:= (q_1,\ldots,q_p)$ as
\[
q_j(x,y):=c_j \cdot \big( u(x)-\mathbf{1}^Tl(x) \big) +l_j(x),\quad j=1,\ldots, p
\]
where each scalar
$c_j:=(\hat{z}_j-l_j(\hat{x}))/(u(\hat{x})-\mathbf{1}^Tl(\hat{x})) \ge 0$.
(For the special case that $u(\hat{x})-\mathbf{1}^Tl(\hat{x})=0$,
we just simply set all $c_j=0$.)
Note that
\[
q_j(\hat{x}, \hat{y}) \,= \,
c_j \big( u(\hat{x}) - \mathbf{1}^Tl(\hat{x})\big) + l_j(\hat{x}) \,= \, \hat{z}_j.
\]
For all $(x,y) \in \mc{U}$, it is clear that $q(x,y)\geq l(x)$.
Moreover, we also have
\[
\mathbf{1}^Tq(x,y)
= \mathbf{1}^Tl(x) (1 - \sum_{j=1}^p c_j )
+ (\sum_{j=1}^p c_j) u(x) \le  u(x),
\]
since $\mathbf{1}^Tl(x) \le u(x)$ and $c_1+\cdots+c_p\leq 1$.
Therefore, $q(x,y)\in Z(x)$ for all $(x,y) \in \mc{U}$.
In the above, $\mathbf{1}$ can be replaced by a nonnegative vector.
The following is the more general case.

\begin{exm} \label{q:gsimplex}
Suppose that the feasible set $Z(x)$ of $(P_x)$ is given as
\[
 a^Tz \leq u(x), \quad
z_j \geq  l_j(x)\, (j=1,\ldots, p),
\]
where $a :=(a_1, \ldots, a_p) \in \re_+^p$,
$u(x)$ and all $l_j(x)$ are polynomials in $x$.
We can choose $q:= (q_1,\ldots,q_p)$ as
\[
q_j(x,y):=c_j \cdot \big( u(x)-a^T l(x) \big)+l_j(x),
\]
where each $c_j:=(\hat{z}_j-l_j(\hat{x}))/(u(\hat{x})-a^Tl(\hat{x}))\geq 0$.
In particular, we set all $c_j=0$ if $u(\hat{x})-a^Tl(\hat{x})=0$.
Note that
\[
q_j(\hat{x}, \hat{y})=l_j(\hat{x})+
c_j \cdot (u(\hat{x})-a^Tl(\hat{x}))
= \hat{z}_j.
\]
For all $(x,y)\in \mathcal{U}$, it is clear that $q(x,y)\geq l(x)$.
In addition, we have
\[
a^Tq(x,y) = a^Tl(x)(1-\sum_{j=1}^p a_jc_j)+(\sum_{j=1}^p a_jc_j)u(x)\leq u(x)
\]
since $a^Tl(x)\leq u(x)$ and $a_1c_1+\cdots a_pc_p\leq 1$. 
Therefore, $q(x,y)\in Z(x)$ for all $(x,y)\in\mathcal{U}$.
\end{exm}

\subsubsection{Annular constraints}
\label{ssc:annular}

Suppose the lower level feasible set is
\[
Z(x) \, = \, \left\{y \in \mathbb{R}^p
\Big|\begin{array}{c}
r(x) \leq \|y-a(x)\|_d \leq R(x)
\end{array}\right\},
\]
where $\|z\|_d:=\sqrt[d]{\sum_{i=1}^p|z_i|^d}$
and $a(x):=[a_1(x),\ldots,a_p(x)]$
is a polynomial vector, and $r(x), R(x)$ are polynomials
such that $0 \le r(x) \le R(x)$ on $\mathcal{U}$. We can choose
\[
q(x,y) :=a(x)+q'(x)s, 
\]
where $q'(x):=\mu_1r(x)+\mu_2R(x)$, $\mu_1,\mu_2$ are scalars such that
\[
\|\hat{z}-a(\hat{x})\|_d=\mu_1r(\hat{x})+\mu_2R(\hat{x}),\quad
\mu_1,\mu_2\geq 0,\quad \mu_1+\mu_2=1,
\]
and $s:=(s_1,\ldots,s_p)$ is the vector such that
\[
s_i:=\frac{ \hat{z}_i-a_i(\hat{x})}{\|  \hat{z}-a(\hat{x})\|_d},
\quad  i=1,\ldots, p.
\]
(For the special case that $\hat{z} = a(\hat{x})$,
we just set all $s_i=p^{-1/d}$.) Then,
\begin{align*}
\hat{z}-q(\hat{x},\hat{y})&=(\hat{z}-a(\hat{x}))-(q(\hat{x},\hat{y})-a(\hat{x}))\\
&=(\hat{z}-a(\hat{x}))-q'(\hat{x})s=0.
\end{align*}
since $q'(\hat{x})= \|\hat{z} - a(\hat{x})\|_d$. Moreover,
\[
\|q(x,y)-a(x)\|_d =\|q'(x)s\|_d=|q'(x)| \cdot \|s\|_d= |q'(x)|.
\]
Because $0 \le r(x) \le R(x)$ on $\mathcal{U}$, we must have
\begin{align*}
r(x) \le   \|q(x,y)-a(x)\|_d  \leq R(x).
\end{align*}
This means that $q(x,y)$ satisfies Assumption~\ref{ass:q(x,y)}.

\section{Numerical experiments}
\label{sc:num}

In this section, we report numerical results of applying Algorithm \ref{alg:GBPP}
to solve bilevel polynomial optimization problems.
The computation is implemented in MATLAB
R2018a, in a Laptop with CPU 8th Generation Intel R CoreTM i5-8250U and RAM 16 GB.
The software {\tt GloptiPoly 3} \cite{henrion2009gloptipoly}
and {\tt SeDuMi} \cite{sturm1999using}
are used to solve the polynomial optimization problems in Algorithm~\ref{alg:GBPP}.
In this section, we use the following notation.

\begin{itemize}

\item
The matrix $G(x,y)$ and vector $\hat{f}(x,y)$ are given as in (\ref{lowerkkt:G(x)lmd=f}).
The polynomials $\phi_j(x,y), d_j(x,y)$ for Lagrange multiplier expressions in Assumption~\ref{asmp:Lag} are given by \reff{d(xy)lmd=W(xy)hatf}, i.e.,
$\phi_j(x,y)$ is the $j$th entry of $W(x,y)\hat{f}(x,y)$,
for a matrix polynomial $W(x,y)$ satisfying \reff{WG(x,y)=d(x,y)}.
In our examples, such $W(x,y)$ is determined by
symbolic Gaussian elimination on the equation \reff{WG(x,y)=d(x,y)}.

\item The notation $(P)$ denotes the bilevel optimization \reff{bilevel:pp}.
Its optimal value and optimizers are denoted by $F^*$ and $(x^*,y^*)$ respectively.

\item The $(P_k)$ denotes the relaxed polynomial optimization in the
$k$th loop of Algorithm~\ref{alg:GBPP}.
Its optimal value and minimizers are denoted as
$F_k^*$ and $(x^{(k)}, y^{(k)})$ respectively.

\item The $(Q_k)$ denotes the lower level optimization problem \reff{alg:Q}
in the $k$th loop of Algorithm~\ref{alg:GBPP}.
Its optimal value and minimizers are denoted as
$\upsilon_k$ and $z^{(k)}$ respectively.

\item We always have $\upsilon_k \le 0$.
Note that $y^{(k)}$ is a minimizer of \reff{alg:Q}
if and only if $\upsilon_k = 0$.
Due to numerical round-off errors, we cannot have $\upsilon_k = 0$
exactly. We view $y^{(k)}$ as a minimizer of \reff{alg:Q}
if $\upsilon_k \ge -\epsilon$, for a tiny scalar $\eps$
(e.g., $10^{-6}$).

\end{itemize}

\begin{exm}
\label{ex:SBPP}
First, we apply Algorithm \ref{alg:GBPP} to solve SBOPs.
The displayed problems are respectively from
\cite[Example 5.2]{SBLYe2013},
\cite[Example 3]{allende2013solving},
 \cite[Example 3.8]{dempe2014solution},
\cite[Example 5.2]{nie2017bilevel} and \cite[Example 2]{ShiAiyoshi}.
All but the first problem are solved successfully in the initial loop $k=0$.
The computational results are shown in Table~\ref{tab:SBPP},
where $\mathtt{\arg\min}$ denotes the set of minimizer(s).
In Table~\ref{tab:SBPP}, we use $v^*$ to denote the value of
$v_k$ in the last loop.
Algorithm \ref{alg:GBPP} computed global optimizers for all of them.
In Table~\ref{tab:SBOPcomparison}, we compare Algorithm~\ref{alg:GBPP}
with some prior methods for solving SBOPs in existing references,
for the quality of computed solutions and the consumed CPU time (in seconds).
For the SBOP in \cite[Example~3]{allende2013solving} and
\cite[Example~3.8]{dempe2014solution},
no CPU time was given in the work,
so we implement their methods with the \texttt{MATLAB} function {\tt fmincon}.
For \cite[Example 3]{allende2013solving}, the method requires to choose starting points.
The performance depends on the choice. We choose $100$ random starting points.
For some of them, the method converges; for the others, it does not.
We report the minimum CPU time for cases that it converges
in Table~\ref{tab:SBOPcomparison}.
In \cite[Example 2]{ShiAiyoshi}, it was mentioned that
$225$ is the true optimal value but the method there cannot compute it accurately.
There is no publicly available code for implementing that method,
so its CPU time is not reported.
\begin{table}[htb]
\centering
\caption{Computational results for some SBOPs.}
\label{tab:SBPP}
\begin{tabular}{l|l}
\specialrule{.2em}{0em}{0.1em}
$\baray{cl}
\min\limits_{x,y\in\mathbb{R}} & x+y\\
\st & (x+1,1-x)\ge 0,\\
& y\in\arg\min\limits_{z\in\mathbb{R}} \frac{1}{2}xz^2-\frac{1}{3}z^3\\
&\qquad\qquad\st\,  (z+1,1-z)\ge 0
\earay
$
& $\begin{array}{l}
F^* = -1.2380\cdot10^{-8}
,\\
v^* = -3.9587\cdot10^{-8},\\
x^* =  -1.0000,\\
y^* = 1.0000.
\end{array}$\\
\hline
\\[-1em]
$\baray{cl}
\min\limits_{x,y\in \re^2}\ & x_1^2-2x_1+x_2^2-2x_2+y_1^2+y_2^2\\
\st & (x_1,x_2,y_1,y_2,2-x_1)\geq 0, \\
& y\in \arg\min\limits_{z\in\mathbb{R}^2}\ z_1^2-2x_1z_1+z_2^2-2x_2z_2\\
&\qquad\qquad \st\ 0.25-(z_1-1)^2\geq 0,\\
&\qquad\ \qquad\quad\  0.25-(z_2-1)^2\geq 0.
\earay$
& $\begin{array}{l}
F^* = -1.0000
,\\
v^* = -1.3113\cdot 10^{-9},\\
x^* = (0.5000,0.5000),\\
y^* = (0.5000,0.5000).
\end{array}$ \\  \hline
\\[-1em]
$\baray{cl}
\min\limits_{x,y\in\mathbb{R}^2}\ & 2x_1+x_2-2y_1+y_2\\
\st & (1+x_1,1-x_1,1+x_2,-0.75-x_2)\geq 0,\\
& y\in \arg\min\limits_{z\in\mathbb{R}^2}\ x^Tz\\
&\qquad\qquad \st \, (2z_1-z_2,2-z_1)\geq 0,\\
&\qquad\ \qquad\quad\ (z_2,2-z_2)\geq 0.
\earay$
&
$\begin{array}{l}
F^* = -5.0000,\\
v^* = -1.4163\cdot10^{-8},\\
x^* = (-1.0000,-1.0000),\\
y^* = (2.0000,2.0000).
\end{array}$\\  \hline
\\[-1em]
$\baray{cl}
\min\limits_{x\in\mathbb{R}^2,y\in\re^3} &
           x_1y_1+x_2y_2+x_1x_2y_1y_2y_3\\
\st & (1-x_1^2,1-x_2^2,x_1^2-y_1y_2)\geq 0,\\
& y\in \arg\min\limits_{z\in\re^3}\, x_1z_1^2+x_2^2z_2z_3-z_1z_3^2\\
&\qquad\qquad \st \quad  (z^Tz-1,2-z^Tz)\geq 0.
\earay$
&
$\begin{array}{l}
F^* = -1.7095,\\
v^* = -1.3995\cdot 10^{-9},\\
x^* = (-1.0000,-1.0000),\\
y^* = (1.1097,0.3143,-0.8184).
\end{array}$ \\  \hline
\\[-1em]
$\baray{cl}
\min\limits_{x,y\in\re^2}\ & (x_1-30)^2+(x_2-20)^2-20y_1+20y_2\\
\st & (x_1+2x_2-30,25-x_1-x_2,15-x_2)\geq 0,\\
& y\in \arg\min\limits_{z\in\re^2}\ (x_1-z_1)^2+(x_2-z_2)^2\\
&\qquad\quad \st \quad  (10-z_1,10-z_2,z_1,z_2)\geq 0.
\earay$
&
$\begin{array}{l}
F^* = 225.0000, \\
v^* = -1.6835\cdot 10^{-9}, \\
x^* = (20.0000,5.0000), \\
y^* = (10.0000,5.0000).
\end{array}$ \\
\specialrule{.2em}{0em}{0.1em}
\end{tabular}
\end{table}
\begin{table}[htb]
\centering
\caption{Comparison with prior methods for some SBOPs}
\label{tab:SBOPcomparison}
\begin{tabular}{l|ll|ll}
\specialrule{.2em}{0em}{0.1em}
\multicolumn{1}{c}{} & \multicolumn{2}{c}{Prior Methods}  &
\multicolumn{2}{c}{Algorithm~\ref{alg:GBPP}}\\
\cmidrule(lr){2-3}\cmidrule(lr){4-5}
 & $F^*$ &  time  & $F^*$ & time\\
 \hline
 \cite[Example 5.2]{SBLYe2013} & $4.7260\cdot10^{-8}$ & 47.48 & $-1.2380\cdot10^{-8}$ & 0.89\\
 \hline
 \cite[Example 3]{allende2013solving} & -1.0000  & 0.05 & -1.0000 & 0.34\\
 \hline
 \cite[Example 3.8]{dempe2014solution} & -5.0000 & 0.06 & -5.0000 & 0.27 \\
 \hline
 \cite[Example 5.2]{nie2017bilevel} & -1.7095 & 13.45 & -1.7095 & 6.43 \\
 \hline
\cite[Example 2]{ShiAiyoshi} & 228.7000 & not available & 225.0000 & 0.27 \\
 \specialrule{.2em}{0em}{0.1em}
\end{tabular}
\end{table}
\end{exm}

\begin{exm}
\label{ex:convex_lower}
Consider the GBOP
\[
\left\{
\baray{cl}
\min\limits_{x,y\in \mathbb{R}^2}  & x_1y_1^3+x_2y_2^3-x_1^2x_2^2\\
\st  & (x_1x_2-1,\, x_1,\, x_2,\, 4-x_1^2-x_2^2-y_1^2-y_2^2)\geq 0,\\
 &   y\in S(x),
\earay
\right.
\]
where $S(x)$ is the optimizer set of
\[
\left\{
\begin{aligned}
\min_{z\in\mathbb{R}^2}\quad & z_1^2+z_2^2-2x_2z_1-x_1x_2z_2\\
\st\quad & (z_1,\, z_2-x_2z_1,\, 2x_1-x_2z_1-z_2)\geq 0.
\end{aligned}
\right.
\]
The polynomial matrix $W(x,y)$ satisfying \reff{WG(x,y)=d(x,y)} is
\[
\begin{pmatrix}
2x_1-2x_2y_1 & 2x_1x_2-2x_2y_2 & 2x_2 & 2x_2 & 2x_2\\
-y_1 & 2x_1-y_2 & 1 & 1 & 1\\
-y_1 & -y_2 & 1 & 1 & 1
\end{pmatrix},
\]
for the denominators
\[
d_1(x,y) = d_2(x,y) = d_3(x,y) = 2x_1>0,\, \forall (x,y)\in\mathcal{U}.
\]
The lower level optimization is convex, for given $x$. According to Proposition \ref{prop:lower convex}, we get the optimizer for this bilevel optimization in the initial loop $k=0$ by Algorithm~\ref{alg:GBPP}.
The computational results are shown in Table~\ref{tab:convex_lower}.
\begin{table}[htb]
\centering
\caption{Computational results for Example~\ref{ex:convex_lower}}
\label{tab:convex_lower}
\begin{tabular}{ll}
\specialrule{.2em}{0em}{0.1em}
$(P_0)$ & $F_0^* = -0.7688$, \\
    & $x^{(0)}=(0.6819, \, 1.7059)$, \,
      $y^{(0)} = (0.3997, \, 0.6819)$, \\
$(Q_0)$ & $\upsilon_0 = -3.3569 \cdot 10^{-7} \rightarrow$ stop. \\
  \hline
  Time & 0.31 seconds, \\
  Output & $F^*=F_0^*,\ x^*=x^{(0)},\ y^*=y^{(0)}$. \\
\specialrule{.2em}{0em}{0.1em}
\end{tabular}
\end{table}
\end{exm}

\begin{exm} \cite[Example~2]{van2003globa}
\label{ex:MuuQuy2003Ex2}
Consider the general bilevel optimization
\[
\left\{
\begin{aligned}
\min_{x\in \mathbb{R}^2,y\in\mathbb{R}^3}\quad
& y_1^2+y_3^2-y_1y_3-4y_2-7x_1+4x_2\\
\st\,\qquad & (x_1,x_2,1-x_1-x_2)\geq 0,\,  y\in S(x),
\end{aligned}
\right.
\]
where $S(x)$ is the optimizer set of
\[
\left\{
\begin{aligned}
\min_{z\in\mathbb{R}^3}\quad & z_1^2+0.5z_2^2+0.5z_3^2+z_1z_2+(1-3x_1)z_1+(1+x_2)z_2\\
\st\quad & (-2z_1-z_2+z_3-x_1+2x_2-2,z_1,z_2,z_3)\geq 0.
\end{aligned}
\right.
\]
The polynomial matrix $W(x,y)$ satisfying \reff{WG(x,y)=d(x,y)} is
\[
\begin{pmatrix}
y_1 & y_2 & y_3 & -1 & -1 & -1 & -1\\
2+x_1+2y_1-2x_2 & 2y_2 & 2y_3 & -2 & -2 & -2 & -2\\
y_1 & 2+x_1+y_2-2x_2 & y_3 & -1 & -1 &
-1 & -1\\
-y_1 & -y_2 & 2+x_1-2x_2-y_3 & 1 & 1 &
1 & 1
\end{pmatrix},
\]
for the denominators $(i=1,2,3,4)$
\begin{align*}
d_i(x,y)  = 2+x_1-2x_2
= 3h_1(x,y)+2h_3(x,y)\geq 0,\ \forall (x,y)\in\mathcal{U}.
\end{align*}
By Algorithm~\ref{alg:GBPP},
we get the optimizer for this bilevel optimization in the initial loop $k=0$.
The computational results are shown in Table~\ref{tab:MuuQuy2003Ex2}.
\begin{table}[htb]
\centering
\caption{Computational results for Example~\ref{ex:MuuQuy2003Ex2}}
\label{tab:MuuQuy2003Ex2}
\begin{tabular}{ll}
\specialrule{.2em}{0em}{0.1em}
$(P_0)$ & $F_0^* = 0.6389$, \\
    & $x^{(0)}=(0.6111, \, 0.3889)$, \,
      $y^{(0)} = (0.0000, \, 0.0000, \, 1.8332)$, \\
$(Q_0)$ & $\upsilon_0 = -6.7295 \cdot 10^{-9} \rightarrow$ stop. \\
  \hline
  Time & 1.09 seconds, \\
  Output & $F^*=F_0^*,\ x^*=x^{(0)},\ y^*=y^{(0)}$. \\
\specialrule{.2em}{0em}{0.1em}
\end{tabular}
\end{table}
\end{exm}

\begin{exm} \cite[Example 5.8]{nie2017bilevel}
\label{NieEx58}
Consider the general bilevel optimization
\[
\left\{
\begin{aligned}
\min_{x,y\in\mathbb{R}^4}\quad & (x_1+x_2+x_3+x_4)(y_1+y_2+y_3+y_4)\\
\st\,\,\quad & (1-x^Tx, x_4-y_3^2, x_1-y_2y_4)\geq 0,\,
 y\in S(x),
\end{aligned}
\right.
\]
where $S(x)$ is the set of optimizer(s) of
\[
\left\{
\begin{aligned}
\min_{z\in\mathbb{R}^4}\quad & x_1z_1+x_2z_2+0.1z_3+0.5z_4-z_3z_4\\
\st\quad & (x_1^2+x_3^2+x_2+x_4-z_1^2-2z_2^2-3z_3^2-4z_4^2, z_2z_3-z_1z_4)\geq 0 .
\end{aligned}
\right.
\]
The matrix polynomial $W(x,y)$ satisfying \reff{WG(x,y)=d(x,y)} is
\[
y_4\cdot\begin{pmatrix}
-y_1y_4 & -y_2y_4 & -y_3y_4 & -y_4^2 & 2y_4 & 2y_4\\
2y_1^2-2(x_1^2+x_3^2+x_2+x_4) & 2y_1y_2 & 2y_1y_3 & 2y_1y_4 & -4y_1 & -4y_1
\end{pmatrix},
\]
for the denominators
\begin{align*}
d_1(x,y)  =  d_2(x,y)  &=  2y_4^2 (x_1^2+x_3^2+x_2+x_4)\\
                       &\geq 2y_4^2(y_1^2+2y_2^2+3y_3^2+4y_4^2)\geq 0,\ \forall (x,y)\in\mathcal{U}.
\end{align*}
By Algorithm~\ref{alg:GBPP}, we get the optimizer for this bilevel optimization
in the initial loop $k=0$.
The computational results are shown in Table~\ref{tab:NieEx58}.
\begin{table}[htb]
\centering
\caption{Computational results for Example~\ref{NieEx58}}
\label{tab:NieEx58}
\begin{tabular}{ll}
\specialrule{.2em}{0em}{0.1em}
$(P_0)$ & $F_0^* = -3.5050$,\\ & $x^{(0)} = (0.5442,0.4682,0.4904,0.4942)$, \\
& $y^{(0)} = (-0.7791,-0.5034,-0.2871,-0.1855)$, \\
$(Q_0)$ & $\upsilon_0 =-1.6143 \cdot 10^{-9} \rightarrow$ stop. \\
  \hline
  Time & 49.08 seconds, \\
  Output & $F^*=F_0^*,\ x^*=x^{(0)},\ y^*=y^{(0)}$. \\
\specialrule{.2em}{0em}{0.1em}
\end{tabular}
\end{table}
\end{exm}

\begin{exm}
\label{ex:lin_simplex}
Consider the general bilevel optimization problem
\[
\left\{
\begin{aligned}
\min_{x,y\in\mathbb{R}^4}\quad & x_1^2y_3^2-2x_3x_4+1.2x_1x_3-x_4^2(y_3+2y_4)\\
\st\,\,\,\quad & (\mathbf{1}^Tx,8-\mathbf{1}^Tx,4x_1x_2-y_1^2-y_2^2)\geq 0,\\
& (x_1-y_1,x_2-y_2,4-x_1-x_2,4-x_3^2-x_4^2)\geq 0,\\
& y\in S(x)
\end{aligned}
\right.
\]
where $S(x)$ is the set of optimizer(s) of
\[
\left\{
\begin{aligned}
\min_{z\in\mathbb{R}^4}\quad & {x_1z_1^2+x_2z_2^2+x_3z_3-x_4z_4}\\
\st\quad & (z_1-z_2-x_2,x_1-z_1+z_2,z_1+z_2+x_1+x_2)\geq 0\\
& (4x_1-2x_2-z_1-z_2,z_3,z_4,3-z_3-z_4)\geq 0
\end{aligned}
\right. .
\]
The matrix polynomial $W(x,y)$ satisfying \reff{WG(x,y)=d(x,y)} is
\[
\left(
\begin{array}{*{11}c}
x_1-y_1+y_2 & y_1-y_2-x_1 & 0 & 0 & 2 & 2 & 0 & 0 & 0 & 0 & 0\\
x_2-y_1+y_2 & y_1-y_2-x_2 & 0 & 0 & 2 & 2 & 0 & 0 & 0 & 0 & 0\\
4x_1-2x_2-y_1-y_2 & 4x_1-2x_2-y_1-y_2 & 0 & 0 & 0 & 0 & 2 & 2 & 0 & 0 & 0\\
y_1+y_2+x_1+x_2 & y_1+y_2+x_1+x_2 & 0 & 0 & 0 & 0 & 2 & 2 & 0 & 0 & 0\\
0 & 0 & y_4 & -y_4 & 0 & 0 & 0 & 0 & 0 & 1 & 0\\
0 & 0 & -y_3 &  y_3 & 0 & 0 & 0 & 0 & 1 & 0 & 0\\
0 & 0 & -y_3 & -y_4 & 0 & 0 & 0 & 0 & 1 & 1 & 1
\end{array}
\right),
\]
for the denominator vector
\begin{multline*}
d(x,y) \,= \, \big(2x_1-2x_2,\, 2x_1-2x_2,\, 10x_1-2x_2, \,
     10x_1-2x_2,\, y_4, \, y_3, \, 3 \big)   \\
\,=\, \big(2(g_1(x,y)+g_2(x,y)) \cdot \mathbf{1}_2,\,
           2(g_3(x,y)+g_4(x,y)) \cdot\mathbf{1}_2,\,
           g_6(x,y),\, g_5(x,y),\,3\big) .
\end{multline*}
The denominators are all nonnegative on $\mc{U}$.
By Algorithm \ref{alg:GBPP},
we get the optimizer of this bilevel optimization in the initial loop $k=0$.
The computational results are shown in Table \ref{tab:lin_simplex}.
\begin{table}[htb]
\centering
\caption{Computational results for Example~\ref{ex:lin_simplex} }
\label{tab:lin_simplex}
\begin{tabular}{ll}
\specialrule{.2em}{0em}{0.1em}
$(P_0)$ & $F_0^* = -24.6491$, \\
  & $x^{(0)} = (0.0000,\, 0.0000,\, 0.3204,\, 1.9742)$,\\
  & $y^{(0)} = (0.0000,\, -0.0000,\, 0.0000,\, 3.0000)$, \\
$(Q_0)$ & $\upsilon_0 = -2.5204 \cdot 10^{-8}\rightarrow$ stop; \\
  \hline
  Time & 2.90 seconds\\
  Output & $F^*=F_0^*,\, x^*=x^{(0)},\, y^*=y^{(0)}$. \\
\specialrule{.2em}{0em}{0.1em}
\end{tabular}
\end{table}
\end{exm}

\begin{exm}
\label{ex:simplex}
Consider the general bilevel optimization
\[
\left\{
\baray{cl}
\min\limits_{x,y\in\mathbb{R}^4}  & y_1x_1^2+y_2x_2^2-y_3x_3-y_4x_4\\
\st & (x_1-1, x_2-1, 4-x_1-x_2)\geq 0,\\
& (x_3-1, 2-x_4, x_3^2-2x_4, 8-x^Tx)\geq 0,\\
& y\in S(x)
\earay
\right.
\]
where $S(x)$ is the set of optimizer(s) of
\[
\left\{
\baray{cl}
\min\limits_{z\in \mathbb{R}^4}  & -z_1z_2+z_3+z_4\\
\st & (z_1,z_2,z_3-x_4,z_4-x_3)\geq 0,\\
& (4x_1x_2-x_1z_1-x_2z_2,3-z_3-z_4)\geq 0.
\earay
\right.
\]
The polynomial matrix $W(x,y)$ satisfying \reff{WG(x,y)=d(x,y)} is
\begin{multline*}
\left(\begin{array}{*{10}c}
x_1(4x_2+y_2)-x_1y_1-x_2y_2 & -x_1y_2 & 0 & 0  \\
-x_1y_1 & 4x_1x_2-x_2y_2 & 0 & 0  \\
0 & 0 & 3-x_3-y_3 & x_3-y_4  \\
0 & 0 & x_4-y_3 & 3-x_4-y_4  \\
-y_1 & -y_2 & 0 & 0  \\
0 & 0 & x_4-y_3 & x_3-y_4
\end{array}\right.
\\
\left.\begin{array}{*{10}c}
 x_1 & x_1 & 0 & 0 & x_1 & 0\\
 x_1 & x_1 & 0 & 0 & x_1 & 0\\
   0 & 0 & 1 & 1 & 0 & 1\\
   0 & 0 & 1 & 1 & 0 & 1\\
   1 & 1 & 0 & 0 & 1 & 0\\
   0 & 0 & 1 & 1 & 0 & 1
\end{array}\right),
\end{multline*}
for the denominator vector $d(x,y)$ as follows
\begin{multline*}
d(x,y) = (4x_1x_2+x_1y_2-x_2y_2, 4x_1x_2+x_1y_2-x_2y_2,\\
  3-x_3-x_4, 3-x_3-x_4, 4x_1x_2+x_1y_2-x_2y_2, 3-x_3-x_4).
\end{multline*}
It is clear that $d(x,y)\geq 0$ for all feasible $(x,y)$.
As in the Subsection~\ref{ssc:simplex},
the polynomial function $q:=(q_1,q_2,q_3,q_4)$
in Assumption~\ref{ass:q(x,y)} can be given as
\begin{equation}
\label{q:simplex}
q_1 = \mu_1x_2,\ q_2 = \mu_2x_1,\ q_3 = x_4+\mu_3(3+x_3+x_4),\ q_4=x_3+\mu_4(3+x_3+x_4),
\end{equation}
where (see Subsections~\ref{ssc:simplex}
for the notation $\hat{x}, \hat{y}, \hat{z}$)
\[
\mu_1 = \frac{\hat{z}_1}{\hat{x}_2},\ \mu_2 = \frac{\hat{z}_2}{\hat{x}_1},\ \mu_3 = \frac{\hat{z}_3-\hat{x}_4}{3+\hat{x}_3+\hat{x}_4},\ \mu_4=\frac{\hat{z}_4-\hat{x}_3}{3+\hat{x}_3+\hat{x}_4},
\]
for given $(\hat{x},\hat{y})\in\mathcal{U}$.
Since $x_1,x_2,x_3\geq 1$ and $x_4\geq -2\sqrt{2}$,
the above $\mu_1,\mu_2,\mu_3,\mu_4$ are well defined.
Applying Algorithm \ref{alg:GBPP},
we get the optimizer for this bilevel optimization in the loop $k=1$.
The computational results are shown in Table \ref{tab:annular}.
\begin{table}[htb]
\centering
\caption{Computational results for Example~\ref{ex:simplex}}
\label{tab:annular}
\begin{tabular}{ll}
\specialrule{.2em}{0em}{0.1em}
$(P_0)$ & $F_0^* = -4.4575$,\\
& $x^{(0)}= (1.1548, \, 1.1546, \, 1.6458, \, 1.3542)$,\\
&  $ y^{(0)} =(0.0000, \, 0.0000, \, 1.3542,\, 1.6458)$,\\
$(Q_0)$ & $\upsilon_0 = -5.3362\rightarrow$ next loop; \\
 & $z^{(0)}  =(2.3093,\, 2.3096,\, 1.3542,\, 1.6458)$,\\
 & $q^{(0)}=(2x_2,\, 2x_1,\, x_4,\, x_3)$ as in (\ref{q:simplex}). \\  \hline
  $(P_1)$ & $F_1^*=-0.4574$, \\
   & $x^{(1)} = (1.0000, \, 1.0000, \, 1.6458, \, 1.3542)$,\\
   & $y^{(1)}  = (2.0000, \, 2.0000,\, 1.3542,\, 1.6458)$, \\
  $(Q_1)$ & $\upsilon_1 = -1.9402 \cdot 10^{-9} \rightarrow$ stop. \\
  \hline
  Time & 102.21 seconds, \\
  Output & $F^*=F_1^*,\ x^*=x^{(1)},\ y^*=y^{(1)}$. \\
\specialrule{.2em}{0em}{0.1em}
\end{tabular}
\end{table}
\end{exm}

\begin{exm}
\label{ring_lin}
Consider the general bilevel optimization problem
\[
\left\{ \baray{cl}
\min\limits_{x,y\in\mathbb{R}^4}  & x_1^2y_4^2-x_2y_3^2+x_3y_1-x_4y_2 \\
\st  & (4-x_1^2-x_2^2,\ -x_1-x_2^2,\ y_1-x_1,\ \mathbf{1}^T x)\geq 0,\\
& (x_3+x_4-3,\, 1+x_3-x_4,\, 3-x_3,\, x_4)\geq 0,\\
& y\in S(x),
\earay \right.
\]
where $S(x)$ is the optimizer(s) set of
\[
\left\{
\begin{aligned}
\min_{z\in\mathbb{R}^4}\quad & (x_1-z_1)^2+(x_2-z_2)^2+z_3-z_4\\
\st\quad & 4x_3^2-x_1^2-x_2^2+2x_1z_1+2x_2z_2-z^Tz\geq 0\\
& (z_3,\ x_3-z_3,\ z_4,\ x_4-z_4)\geq 0
\end{aligned}
\right. .
\]
The matrix polynomial $W(x,y)$ satisfying \reff{WG(x,y)=d(x,y)} is
\begin{multline*}
 \left(
\begin{matrix}
-1 & 0 & 0 & 0\\
-(x_3-y_3)y_3 & 0  & (x_3-y_3)(y_1-x_1) & 0\\
y_3^2 & 0 & -y_3(y_1-x_1) & 0\\
-(x_4-y_4)y_4 & 0 & 0 & (x_4-y_4)(y_1-x_1)\\
y_4^2 & 0 & 0 & -y_4(y_1-x_1)
\end{matrix}\right.\\
\left.
\begin{matrix}
 0 & 0 & 0 & 0 & 0\\
 0 & 0 & y_1-x_1 & 0 & 0\\
 0 & y_1-x_1 & 0 & 0 & 0\\
 0 & 0 & 0 & 0 & y_1-x_1\\
 0 & 0 & 0 & y_1-x_1 & 0
\end{matrix}
\right),
\end{multline*}
for the denominator vector
\[
d(x,y) \, = \, (y_1-x_1) \cdot \big(2, \, x_3-y_3, \, y_3, \, x_4-y_4, \, y_4 \big).
\]
It is clear that $d(x,y)\geq 0$ for all feasible $(x,y)$.
The lower level feasible set $Z(x)$ is a mixture of
separable and annular constraints:
\[
Z(x) = \left\{
z\in\mathbb{R}^4\Bigg|\begin{array}{c}
(z_1-x_1)^2+(z_2-x_2)^2+z_3^2+z_4^2\leq 4x_3^2,\\
0\leq z_3\leq x_3,\ 0\leq z_4\leq x_4
\end{array}
\right\}.
\]
As in Subsections~\ref{ssc:box} and \ref{ssc:annular},
the polynomial function $q:=(q_1,q_2,q_3,q_4)$
in Assumption~\ref{ass:q(x,y)} can be given as
\begin{equation}
\label{q:ring_lin}
q_1 = x_1+\mu_1x_3,\ q_2 = x_2+\mu_2x_3,\ q_3 = \mu_3x_3,\ q_4=\mu_4x_4,
\end{equation}
where {(for a given value $(\hat{x}, \hat{y}, \hat{z})$
of $(x^{(k)}$, $y^{(k)}$, $z^{(k)})$, $q$ satisfies $q(\hat{x}, \hat{y})=\hat{z}$)}
\[
\mu_1 = \frac{\hat{z}_1-\hat{x}_1}{\hat{x}_3},\ \mu_2 = 
\frac{\hat{z}_2-\hat{x}_2}{\hat{x}_3},\ \mu_3 = \frac{\hat{z}_3}{\hat{x}_3},\ \mu_4=\frac{\hat{z}_4}{\hat{x}_4 }.
\]
Since $1\le\hat{x}_3\le 3$ and $0\le \hat{x}_4\le 1+\hat{x}_3$,
we have $\mu_4=0$ for the special case when $\hat{x}_4=0$, then
the above $q$ is well-defined.
This bilevel optimization was solved by Algorithm~\ref{alg:GBPP} in the loop $k=1$.
The computational results are shown in Table \ref{tab:ring_lin}.
\begin{table}[htb]
\centering
\caption{Computational results for Example~\ref{ring_lin}}
\label{tab:ring_lin}
\begin{tabular}{ll}
\specialrule{.2em}{0em}{0.1em}
$(P_0)$ & $F_0^* = -41.7143$,\\
 & $x^{(0)} =(-1.5616,1.2496,3.0000,4.0000)$,\\
 & $y^{(0)} = (-1.5616,6.4458,3.0000,0.0008)$, \\
$(Q_0)$ & $\upsilon_0 = -33.9991$, \\
 & $z^{(0)}  =(-1.5615,1.2496,0.0000,4.0000)$, \\
 & $q^{(0)}=(x_1,x_2,0,x_4)$ as in (\ref{q:ring_lin}).\\ \hline
  $(P_1)$ & $F_1^*=-6.0000$,\\
   & $x^{(1)} = (-2.0000,0.0001,3.0000,0.0001)$,\\
   & $y^{(1)}  = (-2.0000,0.0001,-0.0000,0.0001)$, \\
  $(Q_1)$ & $\upsilon_1 = -2.7612 \cdot 10^{-9} \rightarrow$ stop. \\ \hline
  Time & 3.42 seconds, \\
  Output & $F^*=F_1^*,\ x^*=x^{(1)},\ y^*=y^{(1)}$. \\
\specialrule{.2em}{0em}{0.1em}
\end{tabular}
\end{table}
\end{exm}

\section{Conclusions and discussions}
\label{sec:conclusion}

We propose a new method for solving general bilevel polynomial optimization,
which consists of solving a sequence of polynomial optimization relaxations.
Each relaxation is obtained by using KKT conditions
for the lower level optimization.
For KKT conditions, the Lagrange multipliers are represented
as a polynomial or a rational function.
The Moment-SOS relaxations are used to solve each polynomial relaxation,
which is then refined by the exchange technique from semi-infinite programming.
Under some suitable assumptions, we prove the convergence
for both simple and general bilevel polynomial optimization problems.
Numerical experiments are presented to show the efficiency of the method.
In all of our numerical experiments,
the algorithm converges to optimizers in a few loops.
An interesting future work is to explore the complexity of the algorithm.

We would like to emphasis that when the lower level optimization
$(P_x)$ is replaced by its KKT system, the resulting new optimization
may not be equivalent to the original bilevel optimization \reff{bilevel:pp}
in the sense that optimal solutions \reff{bilevel:pp}
may not be recovered by solving the
the initial polynomial optimization relaxation $(P_0)$.
There exists such an example of exponential functions
as in \cite{mirrlees1999theory}.
In the following, we provide a new example of polynomial functions.
\begin{exm}
\label{ex:KKTtransfail}
Consider the SBOP
\[
\left\{
\begin{array}{cl}
\min\limits_{x \in \re^1, y\in\re^1} & xy-y+\frac{1}{2}y^2\\
\st & 1-x^2\ge 0, 1-y^2\ge 0,\\
&  y\in S(x),
\end{array}
\right.
\]
where $S(x)$ is the optimizer set of
\[
\left\{
\begin{array}{cl}
\min\limits_{ z\in\re^1 } & -xz^2+\frac{1}{2}xz^4\\
\st & 1-z^2\ge 0.
\end{array}
\right.
\]
The KKT condition $\nabla_z f(x,z)-\lmd \nabla_z g(z)=0$
for the lower level optimization is
\[
-2xz+2xz^3+2 \lmd z=0.
\]
Therefore, the initial polynomial optimization relaxation $(P_0)$
is equivalent to
\[
\left\{
\begin{array}{cl}
\min\limits_{x,y,\lambda\in\mathbb{R}} & xy-y+\frac{1}{2}y^2\\
\st & 1-x^2\ge 0,\, 1-y^2\ge 0,\\
& -xy+xy^3+\lambda y=0,\, \lambda\ge 0,\,\lambda(1-y^2)=0.
\end{array}
\right.
\]
By a brute force computation,
the above optimization has the optimal value and minimizer respectively
\[
F_{c}^*=-1.5,\quad (x_{c}^*,y_{c}^*)=(-1,1).
\]
However, $(x_c^*,y_c^*)$ is not even feasible for the original bilevel optimization,
since $y_c^* \not\in  S(x_c^*)=\{0\}$.
We can apply Algorithm \ref{alg:GBPP} to solve this SBOP,
with the LME $\lambda(x,y)=x-xy^2$. It terminated in the loop $k=1$.
The computational results are shown in Table~\ref{tab:KKTtransfail}.
\begin{table}[htb]
\centering
\caption{Computational results for Example \ref{ex:KKTtransfail}}
\label{tab:KKTtransfail}
\begin{tabular}{ll}
\specialrule{.2em}{0em}{0.1em}
$(P_0)$ & $F_0^* = -1.5000$,\\
& $(x^{(0)}, y^{(0)}) =(-1.0000, \, 1.0000)$,\\
$(Q_0)$ & $\upsilon_0 = -0.5000\rightarrow$ next loop; \\
 & $z^{(0)}  =1.1385\cdot10^{-17} $,\ $q^{(0)}=z^{(0)}$ as in Section \ref{ssc:sbop}. \\  \hline
  $(P_1)$ & $F_1^*=-0.5000$, \\
   & $(x^{(1)}, y^{(1)})  = (2.4099\cdot 10^{-9}, \, 1.0000)$, \\
  $(Q_1)$ & $\upsilon_1 = -3.5197\cdot10^{-12} \rightarrow$ stop. \\
  \hline
  Time & 0.75 second, \\
  Output & $F^*=F_1^*,\ x^*=x^{(1)},\ y^*=y^{(1)}$. \\
\specialrule{.2em}{0em}{0.1em}
\end{tabular}
\end{table}
\end{exm}

One may consider to solve the sub-optimization problems 
$(P_k)$ and $(Q_k)$ in Algorithm~\ref{alg:GBPP} by methods other than the
classical Lasserre type Moment-SOS relaxations, e.g.,
the bounded degree SOS relaxations (BDSOS) \cite{LasserreBoundedSOS}, and
the bounded degree SOCP relaxations (BDSOCP) \cite{ChuongSOCP} 
that is a mixture of both BDSOS and scaled diagonal SOS (SDSOS) polynomials \cite{AhmadiSDSOS2019}.
Requested by referees, we give a computational comparison
in the following example. We remark that Algorithm~\ref{alg:GBPP}
fails when these two new relaxations are used to solve $(P_k)$.

\begin{exm}\cite[Example 3.1]{outrata1994}
\label{outrata1994ex31}
{
Consider the GBOP
\begin{equation*}
\left\{
\baray{cl}
\min\limits_{x\in\re^1, y\in\re^2} & 0.5(y_1-3)^2+0.5(y_2-4)^2\\
\st & (x,10-x)\geq 0,\, y\in S(x),
\earay
\right.
\end{equation*}
where $S(x)$ is the optimizer set of
\[
\left\{
\baray{cl}
\min\limits_{z\in\mathbb{R}^2} &
0.5(1+0.2x)z_1^2+0.5(1+0.1x)z_2^2-(3+1.333x)z_1-xz_2\\
\st & (0.333z_1-z_2-0.1x+1,\, 9+0.1x-z_1^2-z_2^2,\, z_1,\, z_2)\geq 0.
\earay
\right.
\]
The polynomial matrix $W(x,y)$ satisfying \reff{WG(x,y)=d(x,y)} is
\[
\begin{pmatrix}
6.006y_1y_2^2 & -6.006y_1^2y_2 & 0 & 0 & -6.006y_2^2 & 6.006y_1^2\\
-3.003y_1y_2 & -y_1y_2 & 0 & 0 & 3.003y_2 & y_1\\
0 & 0 & 0 & 0 & 6.006y_1y_2+2y_2^2 & 0\\
0 & 0 & 0 & 0 & 0 & 6.006y_1^2+2y_1y_2
\end{pmatrix},
\]
with the denominators
\[
d_1(x,y) = d_2(x,y) = 6.006y_1^2y_2+2y_1y_2^2\ge 0,\,\forall (x,y)\in \mathcal{U}.
\]
When the classical Lasserre type Moment-SOS relaxations are used
to solve $(P_k)$ and $(Q_k)$ in Algorithm~\ref{alg:GBPP},
we get the correct solution successfully in the initial loop $k=0$.
The computational results are shown in Table~\ref{tab:outrata},
which are the same as in \cite{outrata1994}.
\begin{table}[htb]
\centering
\caption{Computational results for Example \ref{outrata1994ex31}}
\label{tab:outrata}
\begin{tabular}{llllll}
\specialrule{.2em}{0em}{0.1em}
$(P_0)$ & $F_0^* = 3.2077
$,\\ & $x^{(0)}=4.0604,\quad y^{(0)}=(2.6822,1.4871)$,\\
$(Q_0)$ & $v_0 =-3.7906\cdot10^{-6}\rightarrow$ stop \\
  \hline
  Time & 1.031 seconds\\
  Output & $F^*=F_0^*,\ x^*=x_0^*,\ y^*=y_0^*$\\
\specialrule{.2em}{0em}{0.1em}
\end{tabular}
\end{table}}
Now we apply BDSOCP \cite{ChuongSOCP} and BDSOS \cite{LasserreBoundedSOS}
to solve $(P_0)$.
We implement these two new relaxations methods in SPOT~\cite{MegretskiSPOT}
and solve the resulting SOCP and SDP by {\tt MOSEK}.
Both BDSOCP and BDSOS require to use a parameter $M$ at the beginning.
A scale factor $1/M$ will be multiplied to each inequality constraint,
to ensure that the constraining function value is always between 0 and 1.
We remark that estimating such $M$ exactly is quite difficult,
which is equivalent to solving another polynomial optimization problem \cite{ChuongSOCP}. Here, we tune the parameter $M$.
Let $d$ be the relaxation order for both BDSOCP and BDSOS.
Denote by $\hat{F}^*_{d,M}$ and $\tilde{F}^*_{d,M}$
the objective values that are computed by BDSOCP and BDSOS respectively.
The computational results are shown in Table~\ref{tab:outrataCompare}.
The time there is measured in seconds.
None of these two methods solved the initial optimization $(P_0)$ well,
so Algorithm~\ref{alg:GBPP} fails to continue.
\begin{table}[htb]
\centering
\caption{Computational results for Example \ref{outrata1994ex31} with BDSOCP and BDSOS on sub-optimization problem $(P_0)$}
\label{tab:outrataCompare}
\begin{tabular}{l|l|ll|ll}
\specialrule{.2em}{0em}{0.1em}
\multicolumn{2}{c}{} & \multicolumn{2}{c}{BDSOCP}  & \multicolumn{2}{c}{BDSOS}\\
\cmidrule(lr){3-4}\cmidrule(lr){5-6}
$M$ & $d$ & $\hat{F}^*_{d,M}$ &  time  & $\tilde{F}^*_{d,M}$ & time\\
 \hline
 \multirow{3}{*}{10}  & 1 & 2.6011 & 0.40 & 2.6011 & 0.27\\
  & 2 & 3.5516 & 1.03 & 5.6719 & 0.92\\
  & 3 & 5.8476 & 51.17 & 8.4647 & 51.01\\
 \hline
 \multirow{3}{*}{100} & 1 & 2.6011 & 0.16 & 2.6011 & 0.09\\
  & 2 & 3.2121 & 0.86 & 3.5696 & 0.63\\
  & 3 & 3.7170 & 50.45 & 5.8199 & 48.30\\
 \hline
 \multirow{3}{*}{1000} & 1 & 2.6012 & 0.16 & 2.6011 & 0.09\\
  & 2 & 2.7375 & 0.72 & 3.7937 & 0.95\\
  & 3 & 3.7297 & 41.52 & 7.9497 & 39.33\\
 \specialrule{.2em}{0em}{0.1em}
\end{tabular}
\end{table}
This GBOP was not solved accurately by either BDSOCP or BDSOS relaxations.
\end{exm}

In this paper, we assumed the KKT conditions are satisfied at global optimizers
of the lower level optimization $(P_x)$.
When the KKT conditions fail to hold for $(P_x)$,
we do not know how to apply our proposed method.
For such a case, we may consider to use Fritz John conditions
and Jacobian representations as in the work \cite{nie2017bilevel}.
The KKT approach has advantages, as well as potential drawbacks,
for solving bilevel optimization.
We refer to the work \cite{BouzStill13} for this issue.
It is important future work to solve BOPs
when the KKT conditions fail for $(P_x)$.

\end{document}